\documentclass[12pt]{amsart} 
\usepackage{amsthm,amsbsy,amsfonts,amssymb,amscd}

\usepackage[mathscr]{euscript} 

\usepackage[T1]{fontenc} 
\usepackage{sidenotes} 

\usepackage{enumerate} 
\usepackage{tikz} 
\usetikzlibrary{calc,intersections,through, backgrounds,arrows,decorations.pathmorphing,backgrounds,positioning,fit,petri} 
\usetikzlibrary{calc}

\usepackage{tikz-cd} 

\usepackage{hyperref}

\usepackage[margin=1in]{geometry}

\setcounter{tocdepth}{1}

\newcommand{\dbC}{{\mathbb{C}}} 


\newcommand{\dbG}{{\mathbb{G}}}

\newcommand{\dbZ}{{\mathbb{Z}}}

\newcommand{\scrE}{{\mathscr{E}}}

\newcommand{\scrH}{{\mathscr{H}}}

\newcommand{\scrO}{{\mathscr{O}}} 
\newcommand{\scrP}{{\mathscr{P}}} 
\newcommand{\scrR}{{\mathscr{R}}}

\newcommand{\scrV}{{\mathscr{V}}} 
\newcommand{\scrW}{{\mathscr{W}}}

\newcommand{\bfe}{\mathbf{e}}

\newcommand{\bfV}{\mathbf{V}}

\newcommand{\id}{\mathrm{id}}

\newcommand{\GL}{\mathrm{GL}}

\newcommand{\Hom}{\mathrm{Hom}}

\DeclareMathOperator{\spec}{Spec} 

\DeclareMathOperator{\vol}{vol}

\newcommand{\aut}{\mathrm{Aut}} 


\DeclareMathOperator{\sym}{Sym}

\newcommand{\dfs}{{/\kern-2pt/}}

\numberwithin{equation}{subsection}

\newtheoremstyle{notes} {} {} {} {} {\bfseries} {.} {.5em} {}

\theoremstyle{plain}

\newtheorem{prop}[subsection]{Proposition}

\newtheorem{lemma}[subsection]{Lemma}

\newtheorem{cor}[subsection]{Corollary}

\newtheorem{thm}[subsection]{Theorem}

\theoremstyle{remark}

\newtheorem{rems}[subsection]{Remarks} 

\newtheorem{rem}[subsection]{Remark} 

\swapnumbers

\theoremstyle{remark} 
 
\newtheorem*{nota}{Notation} 
\newtheorem{ex}[subsection]{Example}

\newtheorem{pf34}[subsection]{Proof of~\ref{3.4}}

\newtheorem{pf54}[subsection]{Proof of~\ref{5.4}}

\newtheoremstyle{construction} {} {} {} {} {\bfseries} { } {0pt} {}

\theoremstyle{construction}

%
\title[Orthosymplectic super groups]{The first fundamental theorem of invariant theory for the orthosymplectic super group} 

\author{P.~Deligne} 

\author{G.I.~Lehrer}

\author{R.B.~Zhang}

\newcommand{\ov}{\mathrm{O}(V)}

\newcommand{\osv}{\mathrm{O}(V, q)}

\newcommand{\osp}{\mathrm{OSp}}

\newcommand{\scrhom}{\scrH\mathrm{om}}

\newcommand{\vti}{V \times V \times \dotsb \times V }

\newcommand{\bfhom}{\mathbf{Hom}}

\newcommand{\bfU}{\mathbf{U}}

\newcommand{\bfW}{\mathbf{W}}

\newcommand{\bff}{\mathbf{f}}

\newcommand{\bsV}{\boldsymbol{\scrV}}

\keywords{super algebraic geometry, orthosymplectic group, periplectic group, super Pfaffian, tensor invariant}

\subjclass[2010]{15A72 (primary) 16A22, 14M30 (secondary)}

\begin{document} 
\date{} 
\begin{abstract}
	We give a new proof, inspired by an argument of Atiyah, Bott and Patodi, of the first fundamental theorem of invariant theory for the orthosymplectic super group. We treat in a similar way the case of the periplectic super group. Lastly, the same method is used to explain the fact that Sergeev's super Pfaffian, an invariant for the special orthosymplectic super group, is polynomial. 
\end{abstract}

\maketitle 

\tableofcontents

\section{Introduction} 
\subsection{}\label{1.1}

Let $V$ be a finite dimensional complex vector space, with a non degenerate symmetric bilinear form $B$. The first fundamental theorem (FFT) for the orthogonal group $\ov$ gives generators for the linear space of $\ov$-invariant multilinear forms on $\vti$ ($N$ factors). The generators are obtained from partitions $\scrP$ of $\{1,\dotsc, N\}$ into subsets with two elements: to each $\scrP$ corresponds the multilinear form 
\begin{equation}
	\label{1.1.1} v_1,\dotsc, v_N \mapsto \prod_{\{i_1, i_2\} \in \scrP} B(v_{i_1},v_{i_2}). 
\end{equation}

This theorem is proved in Weyl \cite{Weyl} using the Capelli identity. In appendix 1 to \cite{abp}, Atiyah, Bott and Patodi suggested a more geometric approach, reducing the FFT for $\ov$ to the FFT for $\GL(V)$. The latter gives generators for the linear space of $\GL(V)$-invariant multilinear forms on $V \times \dotsb \times V \times V^\vee \times \dotsb \times V^\vee$: the generators arise from pairings between the factors $V$ and $V^\vee$, by a formula similar to~\eqref{1.1.1}. The FFT for $\GL(V)$ is equivalent to the Schur-Weyl duality between $\GL(V)$ and the symmetric group~$S_M$, both acting on~$V^{\otimes M}$.

\subsection{} We show that this idea can be made to work in the ``super world'', where one systematically considers $\dbZ/2$-graded objects, and where for super vector spaces (that is, $\dbZ/2$-graded vector spaces) $V$ and $W$, the isomorphism $V \otimes W \to W \otimes V$ is defined to be 
\begin{equation}
	\label{1.2.1} v \otimes w \mapsto (-1)^{|v||w|}w \otimes v 
\end{equation}
for $v$ and $w$ homogeneous of degrees $|v|$ and $|w|$ (sign rule). For an explanation of how concepts of algebraic or differential geometry extend to the super world, we refer to Leites \cite{dl}, Manin \cite{ym}, or Bernstein, Deligne and Morgan \cite{pm}, as well as to \S2 below.

In the super world, $V$ is taken to be a super vector space, and $B$ to be a non degenerate bilinear form, symmetric in the sense of being invariant by~\eqref{1.2.1}. The FFT describes the multilinear forms on $\vti$, invariant by the algebraic super group $\ov$. They are derived from~$B$ by~\eqref{1.1.1}. This was announced by Sergeev in~\cite{as}, and we obtain a new proof of this FFT in \S3. Another approach, also inspired by~\cite{abp}, using as ring of coefficients the infinite dimensional Grassmann algebra, may be found in~\cite{lz14a}. In~\cite{lz14b}, this reduction to the case of $\GL(V)$ leads to a new second fundamental theorem (SFT) which describes all relations among the generators.

The form $B$ can be viewed as a morphism
\[ V \otimes V \to \dbC \]
invariant by~\eqref{1.2.1}. ``Morphism'' implies ``compatible with the $\dbZ/2$-grading''. As
\[ (V \otimes V)_0 = V_0 \otimes V_0 \oplus V_1 \otimes V_1, \]
and $\dbC$ is purely of degree~$0$, $B$ is given by non degenerate bilinear forms on the vector spaces $V_0$~and~$V_1$. The symmetry condition is that $B$ be symmetric on~$V_0$ and antisymmetric on~$V_1$. Because of this, $\ov$ is called orthosymplectic and also denoted $\mathrm{OSp}(V)$.

\subsection{} In \S4, we apply the same idea to a super vector space~$V$ equipped with an odd symmetric non degenerate bilinear form~$B$. Let $\Pi$ be $\dbC$, with the $\dbZ/2$-grading for which it is purely odd. The tensor product with~$\Pi$ is the parity change functor. The form $B$ is a morphism $V \otimes V \to \Pi$, invariant by~\eqref{1.2.1}. The FFT for the algebraic super group $\pi\ov := \aut(V, B)$ says that $\pi\ov$-invariant multilinear forms on $V \times \dotsb \times V$, with values in $\Pi$~or~$\dbC$, are generated by forms~\eqref{1.1.1}.

\subsection{} Let us return to the orthosymplectic case, with $\dim V = m | 2n$, that is $\dim V_0 = m$ and $\dim V_1 = 2n$. If $m > 0$, $\mathrm{OSp}(V)$ has two connected components, because $\mathrm{O}(V_0)$ has two connected components while $\mathrm{Sp}(V_1)$ is connected. Let $\mathrm{SOSp}(V)$ be the connected component of the identity. In the classical case of a purely even $V$, $\bigwedge^m V = \bigwedge^m V_0$ is the trivial representation of~$\mathrm{SO}(V)$, and for any isomorphism $\vol: \bigwedge^m V \to \dbC$,
\[ v_1,\dotsc, v_m \mapsto \vol (v_1 \wedge \dotsb \wedge v_m) \]
is an $\mathrm{SO}(V)$-invariant multilinear form which is not $\ov$-invariant. Up to a factor depending on $\vol$, its square is
\[ \det(B(v_i, v_j)).\]
In the super world, a similar $\mathrm{SOSp}(V)$-invariant is a square root of $\det(B(v_i, v_j))^{2n+1}$ which Sergeev calls the super pfaffian. We explain the existence of such a polynomial invariant in~\S5. See \cite{lz15} for an alternative approach to the proof of its basic properties, as well as a discussion of the invariants of~$\mathrm{SOSp}(V)$.

\section{Tools from algebraic geometry}

\subsection{}\label{2.1} Let us explore where the sign rule \eqref{1.2.1} leads us. A \emph{commutative super ring} is a mod $2$-graded ring (with unit) such that 
\begin{equation}
	\label{2.1.2} ab = (-1)^{|a||b|} ba 
\end{equation}
and that $a^2 = 0$ for $a$ odd (a consequence of \eqref{2.1.2} if $2$ is invertible). Starting with commutative super rings, the definitions of \emph{super schemes, super schemes over a field~$k$, super group schemes over~$k$, \ldots} are modeled on the parallel notions in algebraic geometry, and many theorems admit a straightforward generalization. We will use freely theorems in super algebraic geometry whose proof is an easy extension of the proof of a parallel classical theorem. This applies to the formalism of flat, etale, smooth, unramified maps, and to the formalism of faithfully flat descent as expounded in SGA1 (talks I--III and VIII). Some definitions are recalled below as well as in \ref{2.8}.

A commutative super ring $R$ has a \emph{spectrum} $\spec(R)$. This spectrum is a topological space endowed with a sheaf of commutative super rings, the \emph{structural sheaf~$\scrO$.} \emph{Super schemes} are obtained by gluing such spectra. A \emph{closed subscheme} of~$\spec(R)$ is a $\spec(R/I)$, for~$I$ a mod $2$-graded ideal of~$R$.

An equivalent definition of super schemes is that they are topological spaces~$S$ endowed with a sheaf of commutative super rings, the \emph{structural sheaf~$\scrO$,} or $\scrO_S$, such that $(S, \scrO_0)$ is a scheme and $\scrO_1$ a quasi-coherent $\scrO_0$-module. An \emph{open subscheme} is an open subset endowed with the restriction to it of the structural sheaf. A super scheme~$S$ is \emph{over~$k$} if it comes with a morphism $S \to \spec(k)$, that is if $\scrO$ is a $k$-algebra.

The ordinary schemes form a full subcategory of the category of super schemes: they are the super schemes for which $\scrO_1 = 0$. Any super scheme~$S$ contains a largest (closed) ordinary scheme~$S_{\mathrm{rd}}$, with structural sheaf the quotient of $\scrO_S$ by the sheaf of ideals $(\scrO_1)$ generated by~$\scrO_1$. For $T$ an ordinary scheme, $\Hom(T,S_{\mathrm{rd}}) \xrightarrow{\sim} \Hom (T,S)$. The ideal~$(\scrO_1)$ being nilpotent, one can picture $S$ has being the ordinary scheme~$S_{\mathrm{rd}}$, surrounded by a nilpotent fuzz. Topological notions (the underlying topological space, its (Krull) dimension, Zariski density of subsets,\ldots) depend only on~$S_{\mathrm{rd}}$.

As in algebraic geometry, to capture the geometric meaning of these definitions, one should consider the corresponding representable functors. See below. 
\begin{nota}
	We fix a field~$k$. From \S3 on, $k = \dbC$. We will work in the category of super schemes~$S$ over~$k$.
	
	Super vector spaces will as a rule be assumed to be finite dimensional, and super schemes to be of finite type over~$k$. Group schemes of finite type over~$k$ will be called \emph{algebraic groups.}
	
	When this does not lead to ambiguities, we will drop the adjective ``super'', or, as a reminder, write it parenthetically. 
\end{nota}

\subsection{}\label{2.2} 

If $X$ and $S$ are over~$k$, an \emph{$S$-point} of~$X$ is a morphism from $S$~to~$X$. Intuitively: a family of points of~$X$ parametrized by~$S$. The \emph{functor of points $h_X$} of~$X$ is the contravariant functor
\[ h_X : S \mapsto X(S) := \text{the set $\Hom(S,X)$ of $S$-points of~$X$.}\]
By Yoneda's lemma, the functor $X \to h_X$ is fully faithful. It will often be convenient to define a morphism $X \to Y$ by defining its effect on $S$-points: by defining a morphism of functors $h_X \to h_Y$.

For $U$ running over the open subsets of~$S$ (with the induced (super) scheme structure), $U \mapsto X(U)$ is a sheaf on~$S$. Because of this, $h_X$ is determined by its restriction to affine schemes (those of the form $\spec(R)$): $X$ is determined by the covariant \emph{functor of $R$-points}
\[ R \mapsto X(R) := X(\spec(R))\]
and to define a morphism $X \to Y$, it suffices to define a functorial morphism $X(R) \to Y(R)$. 
\begin{ex}
	\label{2.3} Let $V$ be a (finite dimensional, super) vector space over~$k$. The functor of points of the corresponding \emph{affine space~$\bfV$} is
	\[ R \mapsto \text{even component of $V_R := R \otimes V$.}\]
	If $V = k^{p|q}$, that is if $V_0 = k^p$ and $V_1 = k^q$, this affine super space is the \emph{standard affine space} $A^{p|q}$.
	
	By \emph{``basis''}, we will always mean homogeneous basis: in a basis $(e_i)$, each $e_i$ is either even, or odd. If $(e_i)$ is a basis of~$V$, the $R$-points $x$ of $\bfV$ are the $\sum x^i e_i$ with $x^i$ in~$R$ of the same parity as $e_i$. The $x^i$ are the \emph{coordinates} of~$x$ in the basis~$(e_i)$ and $\bfV$ is the spectrum of $k[(X_i)]$, freely generated over~$k$ by indeterminates $X_i$, with $X_i$ of parity equal to that of~$e_i$. Suppose $V$ is in duality with~$W$. If $(e^i)$ is the basis of~$W$ dual to the basis~$(e_i)$ of~$V$, meaning that $\langle e_i, e^j \rangle = \delta_i^j$, the coordinates of~$x \in \bfV(R)$ are the $\langle x, e^i \rangle$, and $\bfV = \spec(\sym^*(W))$, for $\sym^*(W)$ the commutative super $k$-algebra freely generated by the super vector space~$W$. This algebra is $\dbZ$-graded, with~$W$ the component of degree one. 
\end{ex}
\begin{ex}
	\label{2.4} If $V = k$, purely even, we get the \emph{additive group scheme} $\dbG_a = \spec k[X]$ with functor of points $R \to R_0$. The group law is $+$.
	
	If $V = k$, purely odd, we get to the \emph{odd additive group} $\dbG^-_a = \spec(k[\theta])$, with $\theta$ odd and hence $\theta^2 = 0$. The functor of points is $R \mapsto R_1$. The group law is $+$.
	
	More generally, for any $X$ over~$k$, even (resp. odd) global sections of the structural sheaf~$\scrO$ can be identified with morphisms $X \to \dbG_a$ (resp. $X \to \dbG^-_a$). Passing to a corresponding functors of points, we get an interpretation of even (resp. odd, resp. arbitrary) global sections of $\scrO$ as functions~$f$ which to each $R$-point $x$ of~$X$ functorially in~$R$ attach a value $f(x)$ in~$R_0$ (resp. $R_1$, resp. $R$). The global sections of the structural sheaf $\scrO$ of~$X$ will be called \emph{functions on~$X$,} and the above explains how this terminology is to be interpreted. 
\end{ex}
\begin{ex}
	\label{2.5} Let $V$ and $W$ be (super) vector spaces. The \emph{inner Hom} $\scrhom(V,W)$ is the vector space of all linear maps from $V$ to $W$, with its natural $\dbZ/2$-grading. Let $\bfhom(V,W)$ be the corresponding affine space. The $R$-points of~$\bfhom(V, W)$ are the (super) $R$-module morphisms from $V_R$ to~$W_R$. 
\end{ex}
\begin{ex}
	\label{2.6} Let $U$, $V$ and $W$ be (super) vector spaces, and let 
	\begin{equation}
		\label{2.6.1} f : U \otimes V \to W 
	\end{equation}
	be a morphism. After extension of scalars to~$R$, $f$ induces $f_R : U_R \otimes_R V_R \to W_R$, and a map 
	\begin{equation}
		\label{2.6.2} \bfU(R) \times \bfV(R) \to \bfW(R). 
	\end{equation}
	Applying Yoneda's lemma, we get from \eqref{2.6.2} a morphism 
	\begin{equation}
		\label{2.6.3} \bff : \bfU \times \bfV \to \bfW. 
	\end{equation}
	
	The morphism~$\bff$ is bilinear, meaning that for any $R$, \eqref{2.6.2} is $R_0$-bilinear. We leave it as an exercise to the reader to check that conversely, any bilinear morphism~\eqref{2.6.3} is induced by a unique morphism~\eqref{2.6.1}. (cf.~\cite{pm} 1.7)
	
	The condition that $\bff$ be bilinear is implied by it being homogeneous of degree one in each variable. Writing~$\bfW$ as a product of~$\dbG_a$ and $\dbG^-_a$, this claim reduces to the case of functions on~$\bfU \times \bfV$. If $U$ and $U'$, $V$ and $V'$ are in duality, as in \ref{2.3}, a function $f$ on~$\bfU \times \bfV$, that is $f$ in $\sym^*(U') \otimes \sym^*(V')$, is homogeneous of degree~$n$ in the first variable, that is $f(r_0 x, y) = r_0^nf(x, y)$ for $R$-points $x$ and $y$, and $r_0 \in R_0$, if and only if it is in $\sym^n (U') \otimes \sym^*(V')$. Similarly for the second variable. Homogeneity of degree one of $\bff$ in both variables means that $\bff$ is in~$U' \otimes V'$, hence bilinear. 
\end{ex}
\subsection{}\label{2.7} With the notation of~\ref{2.3}, a \emph{quadratic form} $q$ on~$V$ is an even function~$q$ on $\bfV$ homogeneous of degree~$2$, meaning that for any $R$-point $x$ of~$\bfV$ and $r_0$ in $R_0$, $q(r_0 x) = r_0^2q(x)$. Equivalently, $q$ is in the even part of $\sym^2(W) \subset \Gamma(\bfV, \scrO)$. If $q$ is a quadratic form, the associated $B : \bfV \times \bfV \to \dbG_a$, defined by 
\begin{equation}
	\label{2.7.1} B(x, y) = q(x + y) - q(x) - q(y), 
\end{equation}
is symmetric bilinear and $q(x) = \frac{1}{2}B(x,x)$. By~\ref{2.6}, \eqref{2.7.1} is induced by a morphism, also noted $B$: 
\begin{equation}
	\label{2.7.2} B: V \otimes V \to k. 
\end{equation}

\subsection{}\label{2.8} \emph{Etale} and \emph{unramified} morphisms $f : X \to Y$ are defined as in SGA1. Their counterparts in differential geometry are local isomorphisms and immersions. A morphism $f : X \to Y$ is \emph{smooth} of relative dimension~$p|q$ if locally on~$X$ it factors through an etale map to $Y \times A^{p|q}$: 
\begin{equation}
	\label{2.8.1} X \supset U \longrightarrow Y \times A^{p|q} \longrightarrow Y. 
\end{equation}
The counterpart in differential geometry of ``smooth'' is ``submersion''. A factorization~\eqref{2.8.1} is a (relative) \emph{local coordinate system.}

\subsection{}\label{2.9} The constructions \ref{2.3} to~\ref{2.7} can be repeated over a scheme~$S$. A \emph{vector bundle} $\scrV$ over~$S$ is a sheaf of $\scrO$-modules locally free of finite type. More precisely, as we are in the super world, it is a sheaf of $\dbZ/2$-graded modules~$\scrV$ over the $\dbZ/2$-graded structural sheaf~$\scrO$, locally admitting a basis~$(e_i)$. As in~\ref{2.3}, it is always assumed that in a basis each $e_i$ is either even or odd. Locally on~$S$, $\scrV$ admits non canonical decompositions $\scrV = \scrV^+ \oplus \scrV^-$ such that $\scrV^+$ has a purely even basis, and $\scrV^-$ a purely odd one. Only when $S$ is an ordinary scheme, that is when $\scrO_1 = 0$, does such a decomposition agree with the grading $\scrV = \scrV_0 \oplus \scrV_1$.

If $s$ is an $R$-point of~$S$, an $R$-point over~$s$ of the corresponding \emph{affine space bundle} $\bsV$ is an even section of the pull-back of~$\scrV$ to~$\spec(R)$. If $\scrV$ and $\scrW$ are in duality, a quadratic form on~$\bsV$ is an even section of~$\sym^2(\scrW)$. The associated bilinear form~$B$ is bilinear, and corresponds to a morphism
\[ \scrV \otimes \scrV \to \scrO_S. \]
\begin{ex}
	\label{2.10} Let $V$ be a (super) vector space. The \emph{general linear group} $\GL(V)$ is the algebraic group with functor of points
	\[ R \mapsto \text{group of automorphisms of~$V_R$}. \]
	As a scheme, it is an open subscheme of $\bfhom(V,V)$ considered in~\ref{2.5}. 
\end{ex}
\begin{ex}
	\label{2.11} Assume we are not in characteristic~$2$. A quadratic form~$q$ on~$V$ is \emph{non degenerate} if the associated bilinear form~$B$ \eqref{2.7.2} is a self duality of~$V$. The corresponding \emph{orthosymplectic group} $\osv$ is the subgroup of~$\GL(V)$ respecting~$q$, or, what amounts to the same, respecting $B$. Define $\bfV_R$ to be the scheme $\bfV \times \spec(R)$ over $\spec(R)$. An $R$-point of $\osv$ is an automorphism~$g$ of~$V_R$ such that the induced automorphism of~$\bfV_R$ fixes the inverse image of~$q$ in~$\Gamma(\bfV_R, \scrO)$; equivalently, $g$ should fix the bilinear form $V_R \times V_R \to R$ induced by~$B$.
	
	The bilinear form~$B$ of~$V$ is symmetric, in the super sense. It is the orthogonal direct sum of a form $B^+$ on $V^+ := V_0$, such that
	\[ B^+(x, y) = B^+(y, x) \]
	and of a form $B^-$ on $V^- := V_1$, such that
	\[ B^-(x, y) := -B^-(y, x), \]
	hence $B^-(x,x) = 0$. This explains the terminology ``orthosymplectic''.
	
	If $q$ is non degenerate, after a separable extension $k'/k$ and in a suitable basis~$(e_i)$ (with $e_i$ even for $1 \leq i \leq m$, odd for $m + 1 \leq i \leq m + 2n$), $q$ can be written 
	\begin{equation}
		\label{2.11.1} q(x) = \frac{1}{2}\sum_1^m x_i^2 + \sum_1^n x_{m + i} x_{m + i + n}. 
	\end{equation}
	For the corresponding~$B$, one has 
	\begin{equation}
		\label{2.11.2} B(e_i, e_i) = 1 \quad (1 \leq i \leq m) 
	\end{equation}
	\[B(e_{m + i}, e_{m + i + n}) = -1 = -B(e_{m + i + n}, e_{m+i})\]
	and the other $B(e_i, e_j)$ vanish. To put $q$ in this canonical form, one may have to extract square roots, hence the need for the extension~$k'/k$. 
\end{ex}

\subsection{}\label{2.12} One can repeat \ref{2.10} and \ref{2.11} over a base~$S$. If $\scrV$ is a vector bundle over~$S$, $\GL(\scrV)$ is the group scheme over~$S$, whose $R$-points over an $R$-point $s$ of~$S$ is the group of automorphisms of the pull back of $\scrV$ by~$s$.

Assuming $2$ invertible on~$S$, the definition of non degenerate quadratic forms on $\scrV$ repeats \ref{2.11} 
\begin{prop}
	\label{2.13} Let $q$ be a non degenerate quadratic form on~$\scrV$. Locally on~$S$ for the etale topology, $\scrV$ admits a basis in which $q$ is given by~\eqref{2.11.1} 
\end{prop}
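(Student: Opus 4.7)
The plan is to induct on the super-rank $m|2n$ of $\scrV$ at a point $s \in S$, at each step splitting off an orthogonal direct summand of $\scrV$ on which $q$ already takes the canonical form~\eqref{2.11.1}. Because the conclusion is etale-local on~$S$, at every stage I may freely replace~$S$ by an etale neighborhood of~$s$.

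Suppose first that $m \geq 1$. Evaluating at~$s$, the mixed pairing $\scrV(s)_0 \otimes \scrV(s)_1 \to k(s)$ vanishes for parity reasons, so non degeneracy of $B$ on $\scrV(s)$ forces its restriction to $\scrV(s)_0$ to be a non degenerate classical symmetric form, and I can pick an even vector $v_0 \in \scrV(s)$ with $B(v_0,v_0) \neq 0$. Choose an even section $\tilde v$ of $\scrV$ over a Zariski neighborhood of~$s$ lifting $v_0$; then $B(\tilde v,\tilde v) \in \scrO_0$ is a unit at~$s$, hence on a smaller neighborhood. To normalize it to~$1$, I adjoin a square root $T$ of $B(\tilde v,\tilde v)$: the scheme $\spec \scrO[T]/(T^2 - B(\tilde v,\tilde v))$ is etale over $S$ on the locus where $B(\tilde v,\tilde v)$ is a unit, because $2T$ is invertible there, and on this etale cover the section $e_1 := \tilde v / T$ satisfies $B(e_1,e_1) = 1$. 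The pairing $B(e_1,-)\colon \scrV \to \scrO$ is then surjective and split by $\lambda \mapsto \lambda e_1$, giving the orthogonal decomposition $\scrV = \scrO\cdot e_1 \oplus e_1^\perp$. The form $q$ remains non degenerate on $e_1^\perp$, which has rank $(m-1)|2n$, and the inductive hypothesis produces a basis of $e_1^\perp$ putting $q$ in canonical form; adjoining $e_1$ completes the basis of~$\scrV$.

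If instead $m=0$, then $\scrV$ is purely odd and no square root is needed. Pick an odd section $f$ with $f(s) \neq 0$; non degeneracy of $B$ supplies an odd section $\tilde g$ with $B(f,\tilde g)(s) \neq 0$, hence a unit near~$s$, and setting $g := -\tilde g / B(f,\tilde g)$ gives $B(f,g) = -1$. The subbundle $\scrO f + \scrO g$ is a rank-$0|2$ orthogonal summand already in the required canonical form, and its orthogonal complement (of rank $0|(2n-2)$, carrying a non degenerate $q$) is handled by induction on~$n$.

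The only step that leaves the Zariski topology is the square root extraction in the even case, and this is the sole essential obstacle. It disappears precisely because we assume $2$ is invertible on~$S$, which makes $T^2 - c$ a standard etale cover on the locus where $c$ is a unit; all other ingredients—lifting fiber values to sections, surjectivity of $B(e_1,-)$, and splitting short exact sequences of locally free super $\scrO$-modules—are automatic and Zariski-local.
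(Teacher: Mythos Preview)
Your proof is correct and follows essentially the same approach as the paper's: an induction on the super-rank that peels off one even basis vector (after an etale square-root extraction to normalize $B(e,e)=1$) or one hyperbolic odd pair (with a Zariski-local rescaling) at each step. The only point you leave implicit that the paper states explicitly is that $B(f,f)=B(g,g)=0$ automatically for odd sections by the super-symmetry of $B$, which is why the rank-$0|2$ summand is already in canonical form without further adjustment.
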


This means that $S$ can be covered by etale $f_i : U_i \to S$ such that, after pull back to each $U_i$, $q$ has the form~\eqref{2.11.1} in a suitable basis. 
\begin{proof}
	We may suppose that $\scrV$ is everywhere of some dimension~$m|2n$. The proof is by induction on $m + 2n$, the case $m = n = 0$ being trivial. If $m > 0$, the fiber~$V$ of~$\scrV$ at any $\dbC$-point $s$ contains $v$ even such that $q(v) \neq 0$. Let $e$ be an even local section passing through~$v$. In some open neighborhood~$U$ of~$s$, $q(e)$ is invertible. On an etale double cover of~$U$, $1/2q(e)$ has a square root~$r$. Replacing $e$ by $re$, we may assume that $q(e) = 1/2$. Because $q(e)$ is invertible, $\scrV$ is the direct sum of $\scrO e$ and of its orthogonal $\scrV'$ which is of dimension~$n-1|2m$. The local section~$e$ is the beginning of the desired basis. One completes it by applying the induction hypothesis to~$\scrV'$.
	
	If $m = 0$ and $n > 0$, one similarly finds odd local sections $e$~and~$f$ such that $B(e, f)$ is invertible. Dividing $f$ by $-B(e, f)$ one may assume that $B(e,f)=-1$. The symmetry of~$B$ ensures that $B(f,e) = 1$, and that $B(e,e) = B(f,f) = 0$. One concludes by applying the induction hypothesis to the orthogonal complement of~$\scrO e \oplus \scrO f$. 
\end{proof}

\subsection{}\label{2.14} The group scheme $\mathrm{O}(\scrV, q)$ over~$S$ is defined as $\osv$ is. Both $\GL(\scrV)$ and $\mathrm{O}(\scrV, q)$ are smooth over~$S$. Indeed, locally over~$S$ for the etale topology, they are pull backs of algebraic groups $\GL(V)$ and $\osv$ smooth over~$\spec(R)$. In characteristic zero, the only case we need, any algebraic group over~$k$ is smooth (theorem of Cartier, extended to the super world.)

\subsection{}\label{2.15} Suppose that $V$ is a linear representation of the algebraic super group~$G$, meaning that $\rho : G \to \GL(V)$ is given. A vector~$v$ in~$V$ is said to be \emph{$G$-invariant} if for any $R$, the image of~$v$ in $V_R$ is $G(R)$-invariant. If $G = \spec(A)$, then ``$G$ is a group scheme'' translates to ``$A$ is a (super) Hopf algebra.'' Let $\id$ be the $A$-point of~$G$ corresponding to the identity map $G = \spec(A) \to G$. Then, for any $R$, an $R$-point $g \in G(R) : \spec(R) \to G$ is deduced from $\id$ by functoriality: $g$ is the image of $\id$ by $g^* : A \to R$. Invariance of~$v$ can hence be checked just in the universal case $R = A$, $g = \id$.

The action of~$G$ is determined by the action of~$\id$, or equivalently by the comodule structure
\[ V \longrightarrow V_A \xrightarrow{\text{action of~$\id$}} V_A = V \otimes A. \]
The vector~$v$ is fixed by~$G$ if and only if it is fixed by~$\id$, that is, is in the kernel of the double arrow
\[\text{(extension of scalars, comodule structure)}: V \rightrightarrows V \otimes A. \]

\subsection{}\label{2.16} An open subscheme~$U$ of a scheme~$X$ (see \ref{2.1}) is \emph{schematically dense} in~$X$ if for any~$V$ open in~$X$ and for any $f$ in $\Gamma(V, \scrO)$, the vanishing of the restriction of~$f$ to~$U \cap V$ implies that $f = 0$. Example: if $X$ is smooth, for instance is an affine space, any Zariski dense open subscheme is schematically dense. 
\begin{lemma}
	\label{2.17} Suppose $X = \spec(R)$, with $R$ of finite type over~$k$, and that the $f_i$ in $R_0$ $(1 \leq i \leq n)$ define a closed subscheme whose complement is $U$. Then $U$ is schematically dense in~$X$ if and only if 
	\begin{equation}
		\label{2.17.1} R \to R^n : 1 \mapsto (f_1,\dotsc, f_n) 
	\end{equation}
	is injective. 
\end{lemma}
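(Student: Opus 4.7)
The plan is to translate schematic density of $U$ in $X$ into an injectivity statement between rings of sections, and then reduce that ring-theoretic statement to the condition on $(f_1,\dots,f_n)$ stated in the lemma.

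First I would observe that $U = D(f_1) \cup \dots \cup D(f_n)$, so the sheaf axiom applied to this cover gives an inclusion $\Gamma(U,\scrO_X) \hookrightarrow \prod_i R_{f_i}$. Taking $V = X$ in the definition of schematic density therefore corresponds to injectivity of $R \to \prod_i R_{f_i}$ (an element $r \in R$ vanishes on $U$ iff its image in each $R_{f_i}$ is zero). For an arbitrary open $V \subset X$, I would cover $V$ by principal opens $D(g)$, with $g \in R_0$, and note that $R_g \to \prod_i (R_{f_i})_g$ is obtained from $R \to \prod_i R_{f_i}$ by localizing at $g$; since localization is exact, injectivity on $V = X$ implies the corresponding injectivity on each $V$, and hence on every open by the sheaf property. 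Thus schematic density of $U$ in $X$ is equivalent to $R \to \prod_i R_{f_i}$ being injective.

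Next I would unwind this injectivity algebraically. Its kernel consists of those $r \in R$ such that for each $i$ there is $N_i \ge 1$ with $f_i^{N_i} r = 0$. Setting $I = (f_1,\dots,f_n)$ and $N = \max_i N_i$, every monomial of degree at least $n(N-1)+1$ in the $f_i$ contains some $f_i^N$, so the kernel equals $\bigcup_{M \ge 1} \mathrm{Ann}(I^M)$. On the other hand, \eqref{2.17.1} is injective if and only if $\mathrm{Ann}(I) = 0$, since $\bigcap_i \mathrm{Ann}(f_i) = \mathrm{Ann}(I)$.

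It then remains to show that $\mathrm{Ann}(I) = 0$ implies $\mathrm{Ann}(I^M) = 0$ for every $M$, which I would do by downward induction on $M$. For $M \ge 2$ and $r \in \mathrm{Ann}(I^M)$: for every $g \in I^{M-1}$ and every $i$, $(rg)f_i = r(gf_i) = 0$, so $rg \in \mathrm{Ann}(I) = 0$; hence $r \in \mathrm{Ann}(I^{M-1})$, and iterating yields $r \in \mathrm{Ann}(I) = 0$. Since the $f_i$ are even, $I$ is a graded ideal generated by even elements and the sign rule \eqref{2.1.2} introduces no complication, so the argument is identical to the classical one. The step I expect to require the most care is the sheaf-theoretic equivalence between schematic density (a condition ranging over all opens $V$) and the single global injectivity $R \to \prod_i R_{f_i}$; the remainder is routine commutative algebra.
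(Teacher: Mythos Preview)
Your proof is correct and follows the same approach as the paper: reduce schematic density to injectivity of $R \to \prod_i R_{f_i}$, then show this kernel vanishes iff $\mathrm{Ann}(I) = 0$ (the paper picks the maximal $m$ with $I^m f \neq 0$ and takes $g \in I^m f$, while you induct on $M$ --- these are the same argument in contrapositive form). You are in fact more careful than the paper on one point: you explicitly reduce from arbitrary open $V$ to $V = X$ via exactness of localization, whereas the paper's proof tacitly treats only $V = X$.
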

\begin{proof}
	We will show that \eqref{2.17.1} is not injective if and only if there exists $g \neq 0$ in~$R$ vanishing on~$U$.
	
	If $f \neq 0$ is in the kernel of \eqref{2.17.1}, the image of~$f$ in~$R[f_i^{-1}]$ vanishes: $f$ vanishes on the open $U_i$ where $f_i$ is invertible, hence on the union~$U$ of the $U_i$.
	
	Conversely, suppose that $f \neq 0$ vanishes on~$U$. This means that the image of~$f$ in each $R[f_i^{-1}]$ vanishes, that is, that for some~$\ell$, $ff_i^\ell = 0$. Let $I$ be the ideal generated by the $f_i$. For some $\ell$, $I^\ell f = 0$. Let $m \geq 0$ be the largest integer such that $I^m f \neq 0$, and pick $g \neq 0$ in~$I^m f$. By construction, $Ig = 0$: $g$ is in the kernel of~\eqref{2.17.1}. 
\end{proof}
\begin{cor}
	\label{2.18} If $f : X \to Y$ is flat and if $U$ is schematically dense in~$X$, then $f^{-1}(U)$ is schematically dense in~$Y$. 
\end{cor}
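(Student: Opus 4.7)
The plan is to reduce to the affine case and apply Lemma~\ref{2.17} twice, the crucial input being that flatness of $A$ over $B$ preserves injectivity of $B$-linear maps upon tensoring. I interpret the statement in its natural direction: $f \colon X \to Y$ is flat, $U$ is schematically dense in $Y$, and the conclusion is that $f^{-1}(U) \subset X$ is schematically dense.

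First I would verify that schematic density is local on the ambient scheme: if $X = \bigcup_\alpha V_\alpha$ is an open cover and $W$ is an open subscheme of $X$ with $W \cap V_\alpha$ schematically dense in each $V_\alpha$, then $W$ is schematically dense in $X$. This is immediate from the sheaf axiom for $\scrO$, since a section on an open $V \subset X$ vanishes iff its restriction to each $V \cap V_\alpha$ does. Consequently I may cover $Y$ by affine opens $\spec(B)$ and $X$ by affine opens $\spec(A)$ mapping into them, reducing to the case $Y = \spec(B)$ and $X = \spec(A)$ with $A$ flat over $B$ (and with $U \cap \spec(B)$ still schematically dense in $\spec(B)$, by an analogous local argument on $Y$).

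Because $B$ is of finite type over $k$, the space $|\spec(B)|$ is Noetherian, so the open $U$ is quasi-compact and may be written as a finite union $D(f_1) \cup \dotsb \cup D(f_n)$ of distinguished affine opens with $f_i \in B_0$. Lemma~\ref{2.17} then asserts that the $B$-linear map $B \to B^n$, $b \mapsto (bf_1, \dotsc, bf_n)$, is injective. Tensoring with the flat $B$-algebra $A$ preserves this injectivity, yielding an injective $A$-linear map $A \to A^n$, $a \mapsto (af_1', \dotsc, af_n')$, where $f_i' \in A_0$ is the image of $f_i$ under the structure map $B \to A$. These $f_i'$ cut out precisely the closed complement of $f^{-1}(U)$ in $\spec(A)$, so the converse direction of Lemma~\ref{2.17} gives the schematic density of $f^{-1}(U)$ in $X$. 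I anticipate no genuine obstacle; the only super-specific point is the requirement that the $f_i$ be even, which is automatic since distinguished affine opens of a commutative super ring are defined by even elements.
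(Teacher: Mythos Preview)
Your interpretation of the statement is correct (the paper's wording has $X$ and $Y$ transposed, as is clear both from its use in the proof of Proposition~\ref{2.19} and from the paper's own proof, which takes $X = \spec(R)$, $Y = \spec(R')$ with $R'$ flat over $R$). Your argument is correct and is exactly the paper's approach---reduce to the affine case and use that flatness preserves the injectivity of the map~\eqref{2.17.1} from Lemma~\ref{2.17}---only with the localization step and the choice of the $f_i$ spelled out in more detail than the paper's two-line proof.
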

\begin{proof}
	The question is local: we may assume that $X = \spec(R)$ and that $Y = \spec(R')$, with~$R'$ flat over~$R$. By flatness, if \eqref{2.17.1} is injective, the same holds for~$R'$. 
\end{proof}
\begin{prop}
	\label{2.19} Suppose that $f : X \to Y$ is a faithfully flat morphism and that $V$, open in~$Y$, is schematically dense. Let $s$ be a function on~$V$: $s \in \Gamma(V,\scrO)$. If the pull back of~$s$ to~$f^{-1}(V)$ extends to~$X$, then $s$ extends to~$Y$. 
\end{prop}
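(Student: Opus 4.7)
The plan is to reduce the extension problem to the faithfully flat descent sequence
\[
0 \to \Gamma(Y,\scrO) \to \Gamma(X,\scrO) \rightrightarrows \Gamma(X\times_Y X,\scrO),
\]
which holds in the super setting by the same argument as in SGA1. Call the given extension $\tilde s \in \Gamma(X,\scrO)$, so that $\tilde s|_{f^{-1}(V)} = f^*s$. The goal is to produce $\bar s \in \Gamma(Y,\scrO)$ with $f^*\bar s = \tilde s$, and to verify $\bar s|_V = s$.

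First I would check that the two pullbacks of $\tilde s$ to $X \times_Y X$ agree, so that descent applies. Let $p_1,p_2 : X\times_Y X \to X$ be the two projections. Since $f\circ p_1 = f\circ p_2$, we have
\[
p_1^{-1}(f^{-1}(V)) \;=\; (f\circ p_1)^{-1}(V) \;=\; (f\circ p_2)^{-1}(V) \;=\; p_2^{-1}(f^{-1}(V)),
\]
call this common open $W \subset X\times_Y X$. On $W$, $p_1^*\tilde s$ and $p_2^*\tilde s$ both restrict to $(f\circ p_1)^*s = (f\circ p_2)^*s$, hence coincide on $W$.

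Next I would upgrade the equality on $W$ to equality on all of $X\times_Y X$ by schematic density. By Corollary~\ref{2.18} applied to the flat morphism $f : X \to Y$, the open $f^{-1}(V)$ is schematically dense in $X$; applying Corollary~\ref{2.18} once more to the flat projection $p_1 : X \times_Y X \to X$, the open $W = p_1^{-1}(f^{-1}(V))$ is schematically dense in $X \times_Y X$. Since the function $p_1^*\tilde s - p_2^*\tilde s$ vanishes on $W$, schematic density forces it to vanish identically. By descent there exists a unique $\bar s \in \Gamma(Y,\scrO)$ with $f^*\bar s = \tilde s$.

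Finally, to see that $\bar s$ really extends $s$, I would restrict to $V$: the morphism $f^{-1}(V) \to V$ is still faithfully flat, so $\Gamma(V,\scrO) \to \Gamma(f^{-1}(V),\scrO)$ is injective; the two sides of $f^*\bar s|_V = \tilde s|_{f^{-1}(V)} = f^*s$ then give $\bar s|_V = s$. The only delicate point is the verification that $W$ is schematically dense in $X\times_Y X$; that is precisely what Corollary~\ref{2.18} is designed to deliver, so no new work is needed. Everything else is a straightforward application of descent.
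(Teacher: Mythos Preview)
Your proof is correct and follows essentially the same route as the paper's. The only cosmetic differences are that the paper applies Corollary~\ref{2.18} once to the flat composite $X\times_Y X \to Y$ rather than in two steps via $p_1$ and $f$, and it leaves the final check $\bar s|_V = s$ implicit; your added verification via injectivity of $\Gamma(V,\scrO)\to\Gamma(f^{-1}(V),\scrO)$ is a welcome bit of extra care.
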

\begin{proof}
	Let $t$ in $\Gamma(X, \scrO)$ be an extension to~$X$ of~$f^*(s)$ on~$f^{-1}(V)$. Let us consider
	\[ X \times_Y Y \rightrightarrows X \to Y. \]
	By schematic density in~$X \times_Y X$ of the inverse image~$V_1$ of $V$ \eqref{2.18}, one has $\mathrm{pr}_1^* t = \mathrm{pr}_2^* t$: both pull backs are extensions to~$X \times_Y X$ of the inverse image on~$V_1$ of~$s$. By faithfully flat descent, this implies that~$t$ is the pull back of some $\tilde s$ on~$Y$ and this $\tilde s$ is the required extension of~$s$ to~$Y$. 
\end{proof}
\begin{cor}
	\label{2.20} The same holds under the weaker assumption that $X$ covers $Y$, for the faithfully flat topology. 
\end{cor}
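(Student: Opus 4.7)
The plan is to reduce to Proposition~\ref{2.19} by localizing on~$Y$ and then gluing the extensions obtained on each piece. The statement ``$s$ extends to~$Y$'' is of sheaf-theoretic nature: a global extension is the same as a collection of local extensions on a Zariski cover of~$Y$ which agree on overlaps. So the first step is to argue that it suffices to establish the extension statement locally on~$Y$.

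Next, I would unpack the hypothesis ``$X$ covers~$Y$ for the faithfully flat topology''. By definition, this gives a Zariski cover $\{Y_\alpha\}$ of~$Y$ together with, for each~$\alpha$, a faithfully flat morphism $g_\alpha : W_\alpha \to Y_\alpha$ that factors through the pull back $X \times_Y Y_\alpha \to Y_\alpha$. Write $h_\alpha : W_\alpha \to X$ for the composition of the factorization with projection to~$X$, so that $f \circ h_\alpha$ equals $g_\alpha$ followed by the open immersion $Y_\alpha \hookrightarrow Y$.

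For each~$\alpha$, I would then apply Proposition~\ref{2.19} to $g_\alpha : W_\alpha \to Y_\alpha$ with the open subscheme $V_\alpha := V \cap Y_\alpha$ and section $s_\alpha := s|_{V_\alpha}$. Two points need checking. First, $V_\alpha$ is schematically dense in~$Y_\alpha$: this follows from Corollary~\ref{2.18}, since the open immersion $Y_\alpha \hookrightarrow Y$ is flat. Second, the pull back of~$s_\alpha$ to $g_\alpha^{-1}(V_\alpha)$ extends to~$W_\alpha$: by hypothesis $f^*s$ extends to some $t \in \Gamma(X, \scrO)$, and then $h_\alpha^* t$ restricted to $W_\alpha$ provides the required extension (the two agree on $g_\alpha^{-1}(V_\alpha)$ because $f \circ h_\alpha$ factors through $Y_\alpha$). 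Proposition~\ref{2.19} then yields an extension $\tilde s_\alpha \in \Gamma(Y_\alpha, \scrO)$ of~$s_\alpha$.

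Finally, I would glue. On each overlap $Y_\alpha \cap Y_\beta$, both $\tilde s_\alpha$ and $\tilde s_\beta$ restrict to extensions of~$s$ on the schematically dense open $V \cap Y_\alpha \cap Y_\beta$; by the uniqueness of extensions from a schematically dense open (two sections agreeing on such an open differ by a section whose restriction vanishes there, hence vanishes), they agree. The $\tilde s_\alpha$ therefore glue to the desired extension $\tilde s \in \Gamma(Y, \scrO)$. The only real step is the unpacking of the fpqc covering hypothesis into a Zariski-local faithfully flat refinement; once that is done, all the work is done by Proposition~\ref{2.19} and the schematic density lemmas, so I do not anticipate any genuine obstacle.
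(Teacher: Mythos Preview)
Your argument is correct, but it is more elaborate than necessary and differs from the paper's proof in how the hypothesis is unpacked.

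The paper interprets ``$X$ covers $Y$ for the faithfully flat topology'' simply as the existence of a single commutative triangle
\[
\begin{tikzcd}
X' \rar{u} \drar[swap]{f'} & X \dar{f} \\
& Y
\end{tikzcd}
\]
with $f'$ faithfully flat. Given an extension $t \in \Gamma(X,\scrO)$ of $f^*s$, its pull back $u^*t$ extends ${f'}^*s$, and one applies Proposition~\ref{2.19} directly to $f'$. That is the entire proof.

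Your version instead passes to a Zariski cover $\{Y_\alpha\}$ with faithfully flat $W_\alpha \to Y_\alpha$ factoring through $X$, applies \ref{2.19} on each piece, and then glues using uniqueness from schematic density. This works, but the Zariski localization and gluing are superfluous: the disjoint union $\coprod W_\alpha$ (or any single refining faithfully flat $X' \to Y$) already puts you in the situation of \ref{2.19} globally, so the gluing step is absorbed into the faithfully flat descent already carried out inside the proof of \ref{2.19}. The paper's route is shorter; yours makes the sheaf-theoretic bookkeeping explicit but gains nothing beyond that.
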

\begin{proof}
	The assumption is that there exists a commutative diagram
	\[ 
	\begin{tikzcd}
		X' \rar{u} \drar[swap]{f'} &X \dar{f}\\
		&Y 
	\end{tikzcd}
	\]
	with~$X'$ faithfully flat over~$Y$. If $t$ on~$X$ extends $f^*(s)$ on~$f^{-1}(V)$, then $u^*(f)$ on~$X'$ extends ${f'}^*(s)$ on~${f'}^{-1}(V)$, and one applies \ref{2.19} to~$f'$. 
\end{proof}

\subsection{}\label{2.21} If $X$ is smooth, and if the complement of the open subscheme~$U$ is of codimension $\geq 2$, one has
\[ \Gamma(X, \scrO) \xrightarrow{\sim} \Gamma(U, \scrO). \]
The proof is by reduction to the same statement for ordinary schemes, using that the ordinary scheme~$X_{\mathrm{rd}}$ is smooth and that locally $X$ can be retracted to~$X_{\mathrm{rd}}$, making $\scrO_X$ a free module over~$X_{\mathrm{rd}}$.

\section{Reduction of the FFT for $\osp$ to the FFT for~$\GL$}

From now on, the ground field is $\dbC$. The adjective ``super'' will be omitted.

\subsection{}\label{3.1} Let $V$ be a vector space of dimension~$m|2n$. As in~\ref{2.3}, suppose $V$ is in duality with~$W$. The \emph{space $\bar Q$ of quadratic forms} \eqref{2.7} on~$V$ is the affine space attached to~$\sym^2(W)$ \eqref{2.3}. Its $R$-points are the even functions~$q$ on $\bfV_R$ homogeneous of degree~$2$.

Choose a basis~$(e_i)$ of~$V$ and let $p(i)$ be the parity of~$e_i$. In coordinates, the $R$-points~$q$ of~$\bar Q$ can be written uniquely in the form
\[ q(x) = \sum a_{ij} x^i x^j \]
with~$a_{ij}$ in~$R$ of parity $p(i) + p(j)$, and the sum being over $\{(i, j) \mid \text{$i \leq j$ and $i \neq j$ if $e_i$ is odd} \}$.

Let $Q \subset \bar Q$ be the open and dense subscheme of non degenerate quadratic forms. The $R$-points of~$Q$, in coordinates, are the $\sum a_{ij} x^i x^j$ such that the associated bilinear form
\[ B(x,y) = \sum a_{ij}(x^iy^j + (-1)^{p(i)p(j)} x^j y^i) \]
is induced by a self-duality of $V_R$. It follows from~\ref{2.13} that $Q$ is a homogeneous space of~$GL(V)$. More precisely, if $q$ and $q'$ are two $S$-points of~$Q$, then, locally for the etale topology on~$S$, there exists an $S$-point $g$ of~$\GL(V)$ such that $g(q) = q'$, meaning that $q(g^{-1}v) = q'(v)$ after any subsequent change of basis.

From now on, we fix a non degenerate quadratic form~$q$ on~$V$, and write $B$ for the associated bilinear form~\eqref{2.7}. The map~$g \mapsto g(q)$ induces 
\begin{equation}
	\label{3.1.1} \GL(V)/\osv \xrightarrow{\sim} Q. 
\end{equation}
The $\GL(V)$-homogeneity of~$Q$ explained above means that \eqref{3.1.1} is in isomorphism, when both sides are interpreted, via the functor of points, as sheaves on the site of schemes of finite type over~$\dbC$, with the etale topology. This isomorphism implies that
\[ g \mapsto g(q) = q \circ g^{-1} : \GL(V) \to Q \]
is faithfully flat, with~$\osv$ as fiber above~$q$.

\subsection{}\label{3.2} Our aim is a description of the~$\osv$-invariant tensors on~$V$. Because the self-duality~$B$ is $\osv$-invariant, it suffices to describe the $\osv$-invariant elements\\
in~$\scrhom(V^{\otimes N}, \dbC)$. Because $\osv$ contains the parity automorphism of~$V$ ($1$ on~$V_0$ and $-1$ on~$V_1$), any such invariant element is even. Because $-1$ is in~$\osv$, there can be non zero invariant elements only when $N$ is even. Using~\ref{2.6}, the problem can be rephrased as describing the~$\osv$-invariant even functions on~$\bfV^N$ of degree one in each variable.

From~\eqref{3.1.1}, we get 
\begin{lemma}
	\label{3.3} Evaluation at~$q$ induces an isomorphism from $\GL(V)$-invariant functions on~$\bfV^N \times Q$ to $\osv$-invariant functions on~$\bfV^N$. 
\end{lemma}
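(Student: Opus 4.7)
The plan is to produce an explicit two-sided inverse to the evaluation-at-$q$ map, with the key tool being faithfully flat descent along the projection $\pi : \GL(V) \to Q$, $g \mapsto g(q)$, whose faithful flatness (with fiber $\osv$ over $q$) was established at the end of~\ref{3.1}.

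For injectivity, if $F$ is a $\GL(V)$-invariant function on $\bfV^N \times Q$, then for any $R$-points $g$ of $\GL(V)$ and $v$ of $\bfV^N$ one has $F(v, g(q)) = F(g^{-1}v, q)$. Since $\id_{\bfV^N} \times \pi : \bfV^N \times \GL(V) \to \bfV^N \times Q$ is faithfully flat, a function on $\bfV^N \times Q$ is determined by its pullback to $\bfV^N \times \GL(V)$, so $F$ is determined by the function $v \mapsto F(v,q)$ on $\bfV^N$.

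For surjectivity, I start with an $\osv$-invariant function $f$ on $\bfV^N$ and define a function $\Phi$ on $\bfV^N \times \GL(V)$ by $\Phi(v,g) := f(g^{-1}v)$, a prescription which makes functorial sense on $R$-points by~\ref{2.2}. To show that $\Phi$ descends along $\id \times \pi$ to a function $\tilde F$ on $\bfV^N \times Q$, it suffices by faithfully flat descent (as in~\ref{2.19}--\ref{2.20}, transported to the super setting) to check that the two pullbacks of $\Phi$ to the fibre product $\bfV^N \times (\GL(V) \times_Q \GL(V))$ coincide. Using the isomorphism
\[
\GL(V) \times_Q \GL(V) \xrightarrow{\sim} \GL(V) \times \osv, \qquad (g_1,g_2) \longmapsto (g_1,\, g_1^{-1}g_2),
\]
the two projections become $(g,h) \mapsto g$ and $(g,h) \mapsto gh$, so the descent condition reads $f(g^{-1}v) = f((gh)^{-1}v) = f\bigl(h^{-1}(g^{-1}v)\bigr)$ for all $R$-points $g \in \GL(V)$, $h \in \osv$, $v \in \bfV^N$, which is precisely the $\osv$-invariance of $f$. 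The descended function $\tilde F$ satisfies $\tilde F(v,q) = \Phi(v, \id) = f(v)$ by construction, and the $\GL(V)$-invariance of $\tilde F$ follows from that of $\Phi$ — which is immediate since $\Phi(hv, hg) = f((hg)^{-1}(hv)) = f(g^{-1}v) = \Phi(v,g)$ — together with descent of $\GL(V)$-invariance along the faithfully flat cover.

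The only genuine technical point is the application of faithfully flat descent in the super world, but this is one of the pieces of machinery that transfers routinely from SGA1 and was flagged as usable in~\ref{2.1}; everything else is formal bookkeeping with $R$-points and the identification of $Q$ with the sheaf quotient $\GL(V)/\osv$ from~\eqref{3.1.1}.
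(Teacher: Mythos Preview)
Your proof is correct and is precisely the standard unpacking of what the paper leaves implicit: the paper simply writes ``From~\eqref{3.1.1}, we get'' and states the lemma without further argument, treating it as a formal consequence of the identification $\GL(V)/\osv \xrightarrow{\sim} Q$. Your faithfully flat descent argument is exactly how one justifies this in the super setting, and matches the machinery the paper flags as available in~\ref{2.1}.
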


The same holds when one considers only even functions of degree one in each variable~$v$ in~$\bfV$.

In other words: functions $f(v_1,\dotsc, v_N)$ on~$\bfV^N$, invariant under~$\osv$, extend uniquely to functions $F(v_1,\dotsc,v_N, q')$ on~$\bfV^N \times Q$, invariant under~$\GL(V)$. Invariance means that for any~$R$, and $R$-points $v_1,\dotsc,v_N$ in~$\bfV(R)$, $q'$ in~$Q(R)$ and $g$ in~$\GL(V)(R)$, one has $F(v_1,\dotsc,v_N, q') = F(gv_1,\dotsc,gv_N, g(q'))$. As $g(q') = q' \circ g^{-1}$, this can be restated as 
\begin{equation}
	\label{3.3.1} F(v_1,\dotsc,v_N, q' \circ g) = F(gv_1,\dotsc, gv_N, q'). 
\end{equation}
Special case, for $q' = q$: 
\begin{equation}
	\label{3.3.2} F(v_1,\ldots,v_N,q \circ g) = f(gv_1,\dotsc, gv_N). 
\end{equation}
\begin{thm}
	\label{3.4} Let $f$ be a $\osv$-invariant function on~$\bfV^N$, and let $F$ be the corresponding $\GL(V)$-invariant function on~$\bfV^N \times Q$, for which \eqref{3.3.1}, \eqref{3.3.2} hold. Then, $F$ extends to a function~$\bar F$ on~$\bfV^N \times \bar Q.$ 
\end{thm}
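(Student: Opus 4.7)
My strategy is to apply Corollary~\ref{2.20} to the morphism $\pi \colon X \to Y$ given by $g \mapsto q \circ g$. Set $X := \bfV^N \times \bfhom(V, V)$, $Y := \bfV^N \times \bar Q$, and define
\[
\pi(v_1, \dotsc, v_N, g) := (v_1, \dotsc, v_N, q \circ g),
\]
where $q \circ g$ is the quadratic form $w \mapsto q(gw)$. The preimage of the open subscheme $Y^\circ := \bfV^N \times Q$ of $Y$ is $\bfV^N \times \GL(V)$, and over this open $\pi$ restricts to the faithfully flat morphism of~\ref{3.1}.

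The decisive observation is that, by~\eqref{3.3.2}, the pullback $\pi^* F$ on $\bfV^N \times \GL(V)$ equals the restriction of the polynomial function $(v, g) \mapsto f(gv_1, \dotsc, gv_N)$, which manifestly defines a regular function $\tilde F \in \Gamma(X, \scrO)$ on the whole of $X$. Hence $\pi^* F$ admits a canonical regular extension to all of $X$.

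Furthermore, $Y^\circ$ is schematically dense in $Y$: $Y$ is a super affine space and hence smooth, while the complement $Y \setminus Y^\circ$ is a proper closed subscheme (the locus where the non-degeneracy condition of~\ref{2.11} fails), so $Y^\circ$ is Zariski-dense in $Y$ and, by~\ref{2.16}, also schematically dense.

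To invoke Corollary~\ref{2.20}, it remains to exhibit a faithfully flat morphism $X' \to Y$ factoring through $\pi$. This is the main obstacle: $\pi$ itself is not faithfully flat, because the dimension of its fibers jumps over the degenerate locus of $\bar Q$, so a direct application of~\ref{2.19} is excluded. I would construct $X'$ by covering $\bar Q$ with suitable opens $U_\alpha$ on each of which $q'$ admits, after an etale base change, a canonical block form derived from the super analogue of~\ref{2.13} applied to the non-degenerate quotient of $q'$ by its radical; from each such block form one reads off an endomorphism $g \in \bfhom(V,V)$ satisfying $q \circ g = q'$, giving an etale local section of $\pi$ over $U_\alpha$. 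Taking the disjoint union of these etale-local sections produces the required faithfully flat cover of $Y$ factoring through $X$, after which Corollary~\ref{2.20} delivers the extension $\bar F \in \Gamma(Y, \scrO)$ of $F$.
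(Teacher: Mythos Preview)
Your setup through the extension of $\pi^*F$ to $\tilde F$ on all of $X$ is correct and matches the paper's Lemma~\ref{3.6}. The gap is in the final step: your proposed construction of a faithfully flat cover of $\bar Q$ factoring through $\pi$ does not work as stated.

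The difficulty is that the radical of $q'$ is not a sub-bundle of $V \otimes \scrO_{\bar Q}$: its rank jumps along the stratification of $\bar Q$ by rank, so ``the non-degenerate quotient of $q'$ by its radical'' is not defined on any open neighbourhood of a point where the rank drops. Whatever open cover $\{U_\alpha\}$ of $\bar Q$ you choose, some $U_\alpha$ must contain the origin $q' = 0$, and on that open the rank of $q'$ assumes all possible values, so no uniform block decomposition is available there and Proposition~\ref{2.13} cannot be invoked. In fact it is not clear that $\pi$ admits any faithfully flat cover over all of $\bar Q$.

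The paper avoids this by not attempting to cover all of $\bar Q$. It introduces open subsets $\bar Q_1, \bar Q_2 \subset \bar Q$ on which $q'$ restricts non-degenerately to some subspace of codimension $1|0$ (resp.\ $0|2$); there one can split off that non-degenerate piece and is reduced to the base cases of dimension $1|0$ and $0|2$, where the required $T$ is found by solving $t^2 = b/a$ (a flat double cover) or $\det T = b/a$ (Lemmas~\ref{3.9} and~\ref{3.10}). Via Corollary~\ref{2.20} this extends $F$ to $\bfV^N \times (\bar Q_1 \cup \bar Q_2)$. The crucial additional ingredient, absent from your argument, is the Hartogs-type extension~\ref{2.21}: the complement of $\bar Q_1 \cup \bar Q_2$ in $\bar Q$ has codimension $\ge 2$ (Lemma~\ref{3.8}), and since $\bar Q$ is smooth, $F$ then extends to all of $\bfV^N \times \bar Q$.
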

\begin{rems}
	\label{3.5} \leavevmode 
	\begin{enumerate}
		[(i)] 
		\item $Q$ is open non empty in the affine space~$\bar Q$, hence schematically dense. It follows that the function~$\bar F$ whose existence is promised by~\ref{3.4} is unique. 
		\item Define $E$ to be the affine space~$\bfhom(V,V)$ of~\ref{2.5}. As $\GL(V)$ is open non empty in~$E$, it is schematically dense in~$E$, and the identity~\eqref{3.3.1}, which is an identity on~$\bfV^N \times Q \times \GL(V)$ extends to an identity 
		\begin{equation}
			\label{3.5.1} \bar F(v_1,\dotsc, v_N, q' \circ T) = \bar F(Tv_1,\dotsc, Tv_N, q') 
		\end{equation}
		for~$v_i$ in~$\bfV$, $q'$ in~$\bar Q$ and $T$ in~$E$. Special case: $\bar F$ is $\GL(V)$-invariant. 
		\item Similar density arguments show that if $f$ is even (resp. homogeneous of degree one in each variable~$v_i$), so is $\bar F$. Under the same assumption, taking $T$ scalar, one gets for $r_0 \in R_0$
		\[ \bar F(v_1, \dotsc, v_N, r_0^2 q') = r_0^N \bar F(v_1,\dotsc, v_N, q'), \]
		which implies that $\bar F$ is homogeneous of degree~$N/2$ in~$q'$. 
	\end{enumerate}
\end{rems}

To prove theorem~\ref{3.4}, we require several lemmas. Let $u: E = \bfhom(V,V) \to \bar Q$ be the morphism $T \mapsto q \circ T$. One has $u^{-1}(Q) = \GL(V)$: 
\begin{equation}
	\label{3.5.2} 
	\begin{tikzcd}
		\GL(V) \rar[hook] \dar{u} &E \dar{u}\\
		Q \rar[hook] &\bar Q 
	\end{tikzcd}
	\quad. 
\end{equation}
\begin{lemma}
	\label{3.6} Let $\tilde F$ be the function on~$\bfV^N \times \GL(V)$ which is the inverse image by~$u$ of the function~$F$ on~$\bfV^N \times Q$. Then, $\tilde F$ extends to a function~$\bar{\tilde F}$ on~$\bfV_N \times E$. 
\end{lemma}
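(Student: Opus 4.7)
The plan is to exploit the explicit formula \eqref{3.3.2}, $F(v_1,\dotsc,v_N, q\circ g) = f(gv_1,\dotsc,gv_N)$, which describes $\tilde F$ on $\bfV^N \times \GL(V)$, and to observe that its right-hand side makes perfect sense for any endomorphism $T \in E$, not just for invertible $g$. Taking this right-hand side as the definition of the extension turns the lemma into essentially a tautology: the pull-back along $u$ has converted an invariance condition on a homogeneous space into a function on the larger space $E$ that no longer requires $T$ to be invertible.

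More precisely, I would introduce the evaluation morphism
\[
\mu : \bfV^N \times E \longrightarrow \bfV^N, \qquad (v_1,\dotsc,v_N, T) \longmapsto (Tv_1, \dotsc, Tv_N).
\]
This is a genuine morphism of super schemes: it is obtained by repeating $N$ times the evaluation map $\bfV \times E \to \bfV$, $(v, T) \mapsto Tv$, which by \ref{2.6} corresponds to the natural pairing $V \otimes \scrhom(V, V) \to V$ (bilinear, in particular homogeneous of degree one in each variable). I would then set $\bar{\tilde F} := \mu^* f = f \circ \mu$, a function on $\bfV^N \times E$.

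It remains to check that the restriction of $\bar{\tilde F}$ to $\bfV^N \times \GL(V)$ equals $\tilde F$. By construction, $\tilde F$ is the pull-back of $F$ by $\id_{\bfV^N} \times u : \bfV^N \times \GL(V) \to \bfV^N \times Q$, so $\tilde F(v_1,\dotsc,v_N, g) = F(v_1,\dotsc,v_N, q \circ g)$ on $R$-points. Applying \eqref{3.3.2} identifies this with $f(gv_1,\dotsc,gv_N) = (\mu^* f)(v_1,\dotsc,v_N, g) = \bar{\tilde F}(v_1,\dotsc,v_N, g)$, as required.

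There is really no genuine obstacle in the lemma itself: the work has already been done in assembling \eqref{3.3.2}, and the point of the lemma is merely to repackage this observation geometrically. The actual content of Theorem~\ref{3.4} lies further downstream, in descending from the extension on $\bfV^N \times E$ back to $\bfV^N \times \bar Q$ along the morphism $\id \times u$, using faithfully flat descent (\ref{2.20}) together with schematic density of $Q$ in $\bar Q$; that is where the more substantive density arguments will enter, not here.
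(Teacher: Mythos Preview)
Your proof is correct and follows exactly the same idea as the paper's: by~\eqref{3.3.2} one has $\tilde F(v_1,\dotsc,v_N,T) = f(Tv_1,\dotsc,Tv_N)$ for $T \in \GL(V)$, and the right-hand side makes sense for any $T \in E$, furnishing the extension. Your write-up is somewhat more explicit in packaging the evaluation map as a scheme morphism~$\mu$ and invoking~\ref{2.6}, but the argument is the same.
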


As the arguments of~\ref{3.5} show, this extension~$\bar{\tilde F}$ is unique. 
\begin{proof}
	By~\eqref{3.3.2}, for $T$ in~$\GL(V)$
	\[ \bar F(v_1,\dotsc,v_N,T) = F(v_1,\dotsc,v_N, q \circ T) = f(Tv_1,\dotsc,Tv_N). \]
	The right hand side continues to make sense for~$T$ in~$E$, and provides the required extension of~$\tilde F$. 
\end{proof}

\subsection{}\label{3.7} The condition on~$q'$ in~$\bar Q$: ``there exists in~$V$ a subspace $V_1 \subset V$ of codimension~$(1|0)$ on which $q'$ is non degenerate'' is an open condition. Let $\bar Q_1$ be the open subscheme of~$\bar Q$ where it holds. Similarly, let $\bar Q_2$ be the open subscheme where there exists $V_2 \subset V$ of codimension~$0|2$ on which $q'$ is non degenerate. 
\begin{lemma}
	\label{3.8} The complement in~$\bar Q$ of $\bar Q_1 \cup \bar Q_2$ is of codimension~$\geq 2$. 
\end{lemma}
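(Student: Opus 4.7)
\medskip
\noindent\textbf{Proof plan for Lemma~\ref{3.8}.}
Since codimension is a topological notion and depends only on the underlying reduced scheme (see~\ref{2.1}), I would work throughout on
\[
\bar Q_{\mathrm{red}} \;=\; \bar Q^+ \times \bar Q^-,
\]
where $\bar Q^+$ is the affine space of symmetric bilinear forms on the classical vector space $V_0$ (of dimension $m$) and $\bar Q^-$ is the affine space of antisymmetric bilinear forms on $V_1$ (of dimension $2n$). A $\dbC$-point of $\bar Q_{\mathrm{red}}$ is a pair $(q^+,q^-)$.

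The first step is to translate the defining open conditions of $\bar Q_1$ and $\bar Q_2$ into rank conditions on this pair. A classical codimension $(1|0)$ subspace of $V$ must have the form $H\oplus V_1$ for a hyperplane $H\subset V_0$, and its restriction is $(q^+|_H, q^-)$, non-degenerate iff both factors are. Since some hyperplane $H$ yields a non-degenerate $q^+|_H$ iff the kernel of $q^+$ has dimension $\le 1$, one obtains
\[
\bar Q_1 \;=\; \{\operatorname{rank} q^+ \ge m-1\}\cap\{q^- \text{ non-deg.}\}, \qquad
\bar Q_2 \;=\; \{q^+ \text{ non-deg.}\}\cap\{\operatorname{rank} q^- \ge 2n-2\}.
\]

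The complement $\bar Q_{\mathrm{red}}\setminus (\bar Q_1\cup \bar Q_2)$ is the intersection of the two complements. Distributing, it decomposes as $A\cup B\cup C$, where $A=\{\operatorname{rank} q^+\le m-2\}$, $B=\{\operatorname{rank} q^-\le 2n-4\}$, and $C=\{\operatorname{rank} q^+\le m-1\}\cap\{\operatorname{rank} q^-\le 2n-2\}$. It then suffices to show each piece has codimension $\ge 2$.

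For $A$, the variety of symmetric $m\times m$ matrices of corank $\ge 2$ is the singular locus of the irreducible hypersurface $\{\det q^+ = 0\}$: the partials of $\det$ are the cofactors, and they vanish simultaneously exactly when all $(m-1)\times(m-1)$ minors vanish. As a proper closed subscheme of an irreducible codimension $1$ hypersurface, it has codimension $\ge 2$ in $\bar Q^+$ (in fact $3$). The same argument with Pfaffian replacing determinant deals with $B$. Finally, $C$ is the product of the two codimension $1$ hypersurfaces $\{\det q^+=0\}$ and $\{\operatorname{Pf} q^-=0\}$ in the two factors, hence of codimension $2$ in $\bar Q_{\mathrm{red}}$. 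Degenerate cases $m\le 1$ or $n\le 1$ render some of $A,B,C$ empty, which only improves the bound. The only delicate part is the first step --- identifying $\bar Q_1$ and $\bar Q_2$ classically --- since the rest is a standard codimension computation for determinantal/Pfaffian loci.
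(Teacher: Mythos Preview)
Your proof is correct and follows the same line as the paper's: pass to the reduced scheme $\bar Q_{\mathrm{rd}} = \bar Q^+ \times \bar Q^-$ and use the rank stratifications of symmetric and alternating forms on the two factors. The paper gives only a one-sentence sketch (``the stratum after the open stratum is an irreducible hypersurface, and we are removing the next one''); your version is a faithful fleshing-out of that sketch, including the explicit identification of $(\bar Q_i)_{\mathrm{rd}}$ and the mixed piece~$C$ that the paper's phrasing leaves implicit.
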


This is a topological question, hence concerns only the reduced scheme~$\bar Q_{\mathrm{rd}}$, which is the product of the ordinary schemes parametrizing a quadratic form on $V^+ := V_0$ and an alternating form on $V^- := V_1$. These schemes are stratified by the rank, the stratum after the open stratum in an irreducible hypersurface, and we are removing the next one. 
\begin{lemma}
	\label{3.9} For the faithfully flat topology, $u^{-1}(\bar Q_1) \subset E$ covers $\bar Q_1$. 
\end{lemma}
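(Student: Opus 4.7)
The goal is to show that every $R$-point $q':\spec R\to\bar Q_1$ admits, after a faithfully flat base change $R\to R'$, a lift to an $R'$-point of $u^{-1}(\bar Q_1)$, i.e.\ a $T\in E(R')=\bfhom(V_{R'},V_{R'})$ with $q\circ T=q'$ (note that then $u(T)=q'\in\bar Q_1$ automatically, so $T$ lands in $u^{-1}(\bar Q_1)$). My plan is to construct $T$ as an orthogonal sum: an isometry on a codimension-$(1|0)$ non degenerate piece, plus a map of rank-$(1|0)$ even lines that is corrected by extracting a single square root.

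First, I would unpack the defining open condition of $\bar Q_1$: Zariski-locally on $\spec R$ there exists a sub-bundle $V'_1\subset V_R$ of corank $(1|0)$ on which $q'$ is non degenerate (the upgrade from ``subspace at a point'' to ``sub-bundle'' after Zariski localization is a routine Grassmannian argument, since $\{(q',V'_1):q'|_{V'_1}\text{ non degenerate}\}$ is open in a smooth Grassmannian bundle over $\bar Q$ and admits local sections). Let $L':=(V'_1)^{\perp_{q'}}$; then $V_R=V'_1\oplus L'$ is $q'$-orthogonal, and $L'$ is an even line bundle, which after a further Zariski shrinking admits a generator $e'$. Set $\alpha:=q'(e')\in R_0$, which may fail to be a unit.

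Second, on the target side I would fix once and for all a sub-bundle $V_1\subset V$ of corank $(1|0)$ with $q|_{V_1}$ non degenerate (e.g.\ $V_1=e_1^\perp$ in the standard basis~\eqref{2.11.1}), with orthogonal complement $L$ and generator $e$ such that $\beta:=q(e)\in\dbC^\times$. Both $(V_1,q|_{V_1})$ and $(V'_1,q'|_{V'_1})$ are non degenerate quadratic bundles of rank $(m-1|2n)$, so Proposition~\ref{2.13} gives, after an \'etale cover of $\spec R$, standard bases of the form~\eqref{2.11.1} on each side; comparing them produces an isometry $T_1:V'_1\xrightarrow{\sim}V_1\otimes_\dbC R$.

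Third, I would pass to $R':=R[c]/(c^2-\alpha/\beta)$ with $c$ even, which is free of rank~$2$ over $R$, hence faithfully flat, and define $T_0:L'\to L\otimes_\dbC R'$ by $e'\mapsto ce$. Setting $T:=T_1\oplus T_0$ on $V_{R'}=V'_1\oplus L'$ and using the $q$-orthogonality of $V_1$ and $L$, a short computation gives $q(Tv)=q'(v_1)+x^2c^2\beta=q'(v_1)+x^2\alpha=q'(v)$ for $v=v_1+xe'$, so $q\circ T=q'$. Combining the Zariski and \'etale localizations of the first two steps with the faithfully flat extension $R\to R'$ produces the required ffpf cover.

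The main obstacle, and the sole reason we cannot stay in the \'etale topology, is the square root in the third step: $\alpha$ can genuinely fail to be invertible (exactly when $q'|_{L'}$ is degenerate, which is precisely what distinguishes $\bar Q_1$ from $Q$), and the finite flat cover $R\to R[c]/(c^2-\alpha/\beta)$ is not \'etale at the zero locus of $\alpha/\beta$. Everything else (the sub-bundle $V'_1$, the isometry $T_1$) is handled by Zariski-local or \'etale-local considerations already supplied by Proposition~\ref{2.13}.
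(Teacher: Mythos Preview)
Your proof is correct and follows essentially the same route as the paper's: orthogonally decompose $V_R$ into a codimension-$(1|0)$ sub-bundle on which $q'$ is non degenerate and its orthogonal complement, use Proposition~\ref{2.13} to obtain an isometry on the large piece after an \'etale cover, and then extract a square root on the rank-$(1|0)$ line, which forces the passage to a flat but non-\'etale cover. Your presentation is slightly more explicit in two places---the Zariski-local upgrade from ``subspace at a point'' to ``sub-bundle'', and the identification of the flat cover as the free rank-$2$ extension $R[c]/(c^2-\alpha/\beta)$---but the structure of the argument is the same.
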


This means there exists a faithfully flat morphism $S \to \bar Q_1$, which factors through the morphism $u^{-1}(\bar Q_1) \to \bar Q_1$. In fact, as we will see in~\ref{3.11}, the proof will show that $u^{-1}(\bar Q_1)$ is itself faithfully flat over~$\bar Q_1$. 
\begin{proof}
	It suffices to show that for any scheme~$S$, any subbundle (locally direct factor) $\scrV_1$ of the pull back $\scrV = \scrO_S \otimes V$ of~$V$ to~$S$, of codimension~$1|0$, and any quadratic form~$q'$ on~$\scrV$, non degenerate on~$\scrV'$, there exists a faithfully flat $S' \to S$ such that, after pull back to~$S'$, $q'$ is of the form~$q \circ T$ for an endomorphism~$T$ of~$\scrV$. Indeed, once this is proven, one can apply it locally to $S = \bar Q_1$.
	
	Because $q'$ is non degenerate on~$\scrV'$, $\scrV$ is the direct sum of~$\scrV'$ and of its orthogonal, relative to the bilinear form~$B'$ associated to~$q'$. Indeed, this orthogonal~${V'}^\perp$ is the kernel
	\[ 0 \longrightarrow {\scrV'}^\perp \longrightarrow \scrV \longrightarrow {\scrV'}^\vee \longrightarrow 0 \]
	of a morphism defined by~$B'$, for which $\scrV' \subset \scrV$ maps isomorphically to~${\scrV'}^\vee$.
	
	Let us fix $V_1 \subset V$ of codimension~$1|0$ on which $q$ is non degenerate, and let $\scrV_1$ be the corresponding subbundle of~$\scrV$. Let us look for a $T$ mapping $\scrV$ to $\scrV'$ and ${\scrV'}^\perp$ (orthogonal for~$q'$) to~$\scrV'$ (orthogonal for~$q$). This means finding an isomorphism from~$(\scrV', q')$ to~$(\scrV_1, q)$ and a morphism from~${\scrV'}^\perp$ to~$\scrV_1^\perp$ compatible with $q'$~and~$q$. The first exists locally for the etale topology \eqref{2.13}. The second is essentially the original problem in dimension~$1|0$.
	
	Indeed, locally on~$S$, $\scrV_1^\perp$ is $\scrO$, with a quadratic form~$ax^2$ with $a$ invertible while ${\scrV'}^\perp$ is locally isomorphic to~$\scrO$, with a quadratic form~$bx^2$. We need to find a morphism from~$\scrO$ to~$\scrO$, that is an even function $f$ such that $f^2 = b/a$. This equation defines a flat covering of~$S$, on which $f$ exists. 
\end{proof}
\begin{lemma}
	\label{3.10} For the flat topology, $u^{-1}(\bar Q_2) \subset E$ covers $\bar Q_2$. 
\end{lemma}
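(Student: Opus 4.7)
My plan is to mimic the proof of Lemma~\ref{3.9} almost verbatim, the only substantive difference being the local structure of a non degenerate quadratic bundle of dimension~$0|2$ in place of one of dimension~$1|0$. As in~\ref{3.9}, the statement reduces to the following: for any scheme~$S$, any subbundle $\scrV' \subset \scrV := \scrO_S \otimes V$ of codimension~$0|2$, and any quadratic form~$q'$ on~$\scrV$ which is non degenerate on~$\scrV'$, there exists a faithfully flat $S' \to S$ such that, after pull back to~$S'$, $q'$ is of the form $q \circ T$ for some endomorphism~$T$ of~$\scrV$. Applying this locally to $S = \bar Q_2$ will then yield the lemma.

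From here I would proceed exactly as in~\ref{3.9}. Using the non degeneracy of~$q'$ on~$\scrV'$, I decompose $\scrV = \scrV' \oplus {\scrV'}^\perp$ orthogonally for the bilinear form associated to~$q'$; here ${\scrV'}^\perp$ is a bundle of dimension~$0|2$. I fix once and for all a subspace $V_2 \subset V$ of codimension~$0|2$ on which $q$ is non degenerate (which exists since the relevant dimensions satisfy $n \geq 1$, else $\bar Q_2$ is empty), and let $\scrV_2 \subset \scrV$ be the corresponding subbundle, so that $\scrV = \scrV_2 \oplus \scrV_2^\perp$ orthogonally for~$B$. I then seek a block diagonal~$T$ mapping $\scrV'$ isometrically onto $\scrV_2$ and ${\scrV'}^\perp$ isometrically onto $\scrV_2^\perp$. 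The first requirement is an isometry between two non degenerate quadratic bundles of dimension~$m|(2n-2)$, which exists locally for the etale topology by Proposition~\ref{2.13}.

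The remaining step---and the analogue of the square root construction in the proof of~\ref{3.9}---is the construction of an isometry ${\scrV'}^\perp \to \scrV_2^\perp$ between two non degenerate quadratic bundles of dimension~$0|2$. This is the only place where any real work occurs, and I expect it to be strictly easier than the corresponding dimension~$1|0$ step: following the $m=0$, $n>0$ case of the proof of~\ref{2.13}, a non degenerate (alternating) form on a $0|2$-bundle can be put in the standard form of~\eqref{2.11.2} by picking odd local sections $e$, $f$ with $B(e,f)$ invertible and then rescaling $f$ to $f/(-B(e,f))$---an operation requiring only division by an invertible even function, \emph{without} any square root extraction. Hence both $({\scrV'}^\perp, q')$ and $(\scrV_2^\perp, q)$ acquire standard bases Zariski-locally, and taking $T$ to be the identity in these bases produces the desired isometry on a Zariski open cover. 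Piecing together with the etale-local construction of the other block, I obtain~$T$ on an etale cover of~$S$; this is in particular a flat cover, as required.
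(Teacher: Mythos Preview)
There is a genuine gap. You assume that the restriction of~$q'$ to~${\scrV'}^\perp$ is non degenerate, and accordingly look for an \emph{isometry} ${\scrV'}^\perp \to \scrV_2^\perp$. But $q'$ is only a point of~$\bar Q_2$, not of~$Q$: the defining condition is that $q'$ be non degenerate on the codimension~$0|2$ subbundle~$\scrV'$, and nothing prevents $q'$ from being degenerate on its orthogonal complement~${\scrV'}^\perp$. (Compare the parallel place in the proof of~\ref{3.9}: ${\scrV'}^\perp$ carries the form~$bx^2$ with~$b$ \emph{arbitrary}, and one seeks only a \emph{morphism} compatible with the forms, not an isomorphism.) Your appeal to Proposition~\ref{2.13} to standardise~$q'|_{{\scrV'}^\perp}$ therefore fails, since that proposition requires non degeneracy.

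The correct local problem is: given the non degenerate form~$q = a x_1 x_2$ (with~$a$ invertible) on~$\scrV_2^\perp$ and an arbitrary form~$q' = b x_1' x_2'$ on~${\scrV'}^\perp$, find an $\scrO$-linear~$T: {\scrV'}^\perp \to \scrV_2^\perp$ with $q' = q \circ T$. Writing~$T$ as a $2 \times 2$ matrix with even entries, this amounts to $\det T = b/a$, which has the Zariski-local solution $T = \left(\begin{smallmatrix} b/a & 0 \\ 0 & 1 \end{smallmatrix}\right)$. So your intuition that this step is easier than the~$1|0$ case is right---no square root is needed---but for a different reason: one solves a determinant equation rather than rescaling a non degenerate form.
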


The proof follows the same step, reducing us this time to dimension~$0|2$. We have now to consider $\scrE$, with an odd basis $e_1, e_2$, and a non degenerate~$q$, written in coordinates as $ax_1x_2$ with $a$ invertible, and $\scrE'$, with an odd base $e_1', e_2'$ and $q'$ of the form~$bx_1'x_2'$. We need to find $T: \scrE' \to \scrE$ such that $q' = q \circ T$. Writing $T$ as a $2 \times 2$ matrix, this means solving an equation $\det T = b/a$. One can either argue that this equation has the solution~$ \left( 
\begin{smallmatrix}
	b/a & 0\\
	0 &1 
\end{smallmatrix}
\right)$, or that it defines $S'/S$ faithfully flat. 
\begin{rem}
	\label{3.11} The proof can be rephrased as expressing $u : u^{-1}(\bar Q_i) \to \bar Q_i$ as a composite
	\[ u^{-1}(\bar Q_i) \xrightarrow{v} Y \xrightarrow{w} \bar Q_i \]
	where a point of~$Y$ above $q'$ is a subspace of~$V$ of codimension $1|0$~or~$0|2$ on which $q'$ is non degenerate, and where $v(T)$ is the image~$T(V_i)$. Both maps are faithfully flat. 
\end{rem}
\begin{pf34}
	\label{3.12} By \ref{3.6},~\ref{3.9},~\ref{3.10} and~\ref{2.20}, the function~$F$ on~$\bfV^N \times Q$ extends to~$\bfV^N \times (\bar Q_1 \cup \bar Q_2)$. As the complement of $\bar Q_1 \cup \bar Q_2$ in~$\bar Q$ is of codimension~$\geq 2$ \eqref{3.8}, it extends to~$\bfV^N \times \bar Q$ \eqref{2.21}. \qed 
\end{pf34}

\subsection{FFT: reduction from $\osp$ to~$\GL$}\label{3.13}

Suppose $f : V^{\otimes N} \to \dbC$ is $\osp(V, q)$-invariant. We may suppose $N$ even. Interpret $f$ as a function on~$\bfV^N$, even and of degree one in each variable. By \ref{3.4}, \ref{3.5}, there exist a $\GL(V)$-invariant function $F(v_1,\dotsc,v_N, q')$ on~$\bfV^N \times \bar Q$, even of degree one in the $v_i$, and of degree~$N/2$ in~$q'$, such that
\[ f(v_1,\dotsc,v_N) = F(v_1,\dotsc,v_N,q). \]
The usual polarization argument shows that there exists a $\GL(V)$-invariant function\\
$F_1(v_1,\dotsc,v_N,q_1',\dotsc,q_{N/2}')$ on $\bfV^N \times \bar Q^{N/2}$ homogeneous in degree one in each variable, such that
\[ F(v_1,\dotsc,v_N, q) = F_1(v_1,\dotsc,v_N, q,\dotsc, q). \]
As in~\ref{2.3}, let $W$ be in duality with~$V$. Interpret $F_1$ as a morphism
\[ V^{\otimes N} \otimes \sym^2(W)^{\otimes N/2} \to \dbC, \]
invariant by~$\GL(V)$. As the decomposition
\[ W \otimes W = \sym^2(W) \oplus \wedge^2 W \]
is $\GL(V)$-invariant, $F_1$ comes from a $\GL(V)$-invariant morphism
\[ V^{\otimes N} \otimes W^{\otimes N} \to \dbC. \]
This is the announced reduction.

The FFT for~$\GL(V)$ says that a generating set for the $\GL(V)$-invariant morphisms is obtained by pairing in all possible ways each factor~$V$ with a factor~$W$, and taking the product:
\[ v_1,\dotsc,v_N, w_1,\dotsc,w_N \to \prod \langle v_i, w_{\sigma(i)}\rangle. \]

Passing back to the $\osp$-invariant, we find a generating set consisting of
\[ V^{\otimes N} \to \dbC : v_1,\dotsc, v_N \mapsto B(v_1,v_2)B(v_3,v_4) \dotsm B(v_{N-1}, v_N) \]
and its transforms by the symmetric group.

\section{Periplectic analog}

\subsection{}\label{4.1} Let $V$ be a vector space of dimension~$n|n$. An \emph{odd quadratic form}~$q$ on~$V$ is an odd function of degree~$2$ on~$\bfV$. Equivalently, if $V$~and~$W$ are in duality, it is an odd element of~$\sym^2(W)$. The associated bilinear form~$B$ is the odd function $q(x + y) - q(x) - q(y)$ on~$\bfV \times \bfV$. If $\Pi$ is $\dbC$, viewed as an odd vector space with basis~$\bfe := 1$, so that $\mathbf{\Pi} = \dbG^-_a$, $B$ is induced \eqref{2.6} by a (super) symmetric morphism again noted $B$: 
\begin{equation}
	\label{4.1.1} V \otimes V \to \Pi. 
\end{equation}

The form~$q$ is \emph{non degenerate} if \eqref{4.1.1} is a perfect pairing, that is induces an isomorphism of~$V$ with $\Pi V^\vee := \Pi \otimes V^\vee$. Suppose $q$ is non degenerate. The \emph{periplectic group}~$\pi\osv$ is the subgroup of~$\GL(V)$ respecting~$q$ or, equivalently, $B$. The description of $R$-points is the same as in~\ref{2.11}.

As in \ref{2.9}~and~\ref{2.12}, those definitions can be repeated over a scheme~$S$. The analog of~\ref{2.13} is the 
\begin{prop}
	\label{4.2} Let $q$ be an odd non degenerate quadratic form on a vector bundle~$\scrV$ on~$S$. Let $B$ be the associated bilinear form
	\[ B: \scrV \otimes \scrV \to \Pi \otimes \scrO. \]
	Then, locally on~$S$, for some~$n$, $\scrV$ admits a basis $e_1,\dotsc, e_n, f_1,\dotsc,f_n$, with even $e_i$ and odd $f_i$, such that on this basis $B$ vanishes, except for
	\[ B(e_i, f_i) = B(f_i, e_i) = \bfe. \]
\end{prop}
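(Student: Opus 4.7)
The approach follows the blueprint of the proof of Proposition~\ref{2.13}: induction on $n$, at each step peeling off a suitable rank-$(1|1)$ direct summand on which $B$ takes the prescribed form, and applying the induction hypothesis to its orthogonal complement. The base case $n=0$ is trivial, so the real content is the inductive step.

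For the inductive step, I begin by noting that because $B$ is odd and grading-preserving into $\Pi$, it automatically vanishes on $V_0\otimes V_0$ and on $V_1\otimes V_1$; the whole content of $B$ sits in the induced pairing $\scrV_0\otimes\scrV_1\to\Pi\otimes\scrO$, which is a perfect pairing by the non-degeneracy hypothesis. At any $\dbC$-point $s\in S$ I pick an arbitrary nonzero even vector and extend it to a local even section $e$ of $\scrV$ on a Zariski open neighborhood of~$s$. The perfect pairing lets me find (after possibly shrinking) a local odd section $f$ such that $B(e,f)$ is invertible in $\Pi\otimes\scrO$, i.e.\ equals $g\bfe$ with $g$ an invertible \emph{even} function. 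Dividing $f$ by $g$ — an operation that stays inside $\scrV$ since $g$ is even — gives $B(e,f)=\bfe$, and super-symmetry at degrees $(|e|,|f|)=(0,1)$ forces $B(f,e)=\bfe$ as well.

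The second step is to split $\scrV$ as $(\scrO e\oplus\scrO f)\oplus\scrV'$, where $\scrV'$ is the $B$-orthogonal of $\scrO e\oplus\scrO f$. Since $B$ is non-degenerate on the rank-$(1|1)$ sub-bundle $\scrO e\oplus\scrO f$, an explicit projection formula (using $B(e,f)=B(f,e)=\bfe$, modulo the sign bookkeeping forced by the parity of the argument) exhibits $\scrV'$ as a direct summand. The restriction of $B$ to $\scrV'$ is again an odd non-degenerate bilinear form, and $\scrV'$ has rank $(n-1|n-1)$. The induction hypothesis then produces the remaining basis $e_2,\dots,e_n,f_2,\dots,f_n$.

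The key contrast with Proposition~\ref{2.13} is that here \emph{no etale cover is needed}: in the orthosymplectic case the normalization $q(e)=1/2$ required extracting a square root, but in the present situation the rescaling of $f$ to achieve $B(e,f)=\bfe$ is purely a division by an invertible even scalar. So the statement is in fact Zariski-local, not merely etale-local. I do not anticipate a serious obstacle; the only point that requires mild care is recording the correct signs in the explicit projection onto $\scrV'$, but this is a routine super-linear-algebra verification once the pair $(e,f)$ has been produced.
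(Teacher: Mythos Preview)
Your overall inductive strategy matches the paper's, and your observation that the argument is Zariski-local (no square roots needed, only division by an invertible even function) is correct. However, there is a genuine gap: your claim that $B$ ``automatically vanishes on $V_0\otimes V_0$ and on $V_1\otimes V_1$'' is false over a super base~$S$. If $e$ is an even local section of $\scrV$, then $B(e,e)$ lies in $(\Pi\otimes\scrO)_0=\bfe\cdot\scrO_1$, which need not vanish when $\scrO_1\neq 0$. Your intuition holds only at the fibre over a $\dbC$-point, where $\Pi_0=0$; it does not survive extension of scalars to a commutative super ring (compare the warning in~\ref{2.9} that the grading $\scrV=\scrV_0\oplus\scrV_1$ agrees with a splitting by basis parity only over an ordinary scheme). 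Consequently, after your normalization the pair $(e,f)$ satisfies $B(e,f)=B(f,e)=\bfe$ and $B(f,f)=0$ (the latter genuinely follows from super-symmetry, since $f$ is odd), but in general $B(e,e)=\alpha\,\bfe$ with $\alpha\in\scrO_1$ possibly nonzero, so your basis does not have the announced form.

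The paper supplies exactly the missing correction: once $B(e,f)=\bfe$, replace $e$ by $e'=e-\tfrac{\alpha}{2}f$. Since $\alpha$ is odd and $f$ is odd, $\tfrac{\alpha}{2}f$ is even and $e'$ remains an even section; a direct computation gives $B(e',e')=0$ while $B(e',f)=B(f,e')=\bfe$ are unchanged. With this adjustment in place, your orthogonal splitting and induction go through as written.
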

\begin{proof}
	As in~\ref{2.13}, we may assume $\scrV$ of some dimension~$m|n$. Because $\scrV$ is isomorphic to~$\scrV^\vee \otimes \Pi$, one has $m = n$. The proof is by induction on~$n$, the case~$n = 0$ being trivial. It will be convenient to view $B$ as an odd bilinear map: $\text{(previous $B$)} = \text{($\bfe$. this new $B$)}$.
	
	If $n = 1$, let $(e, f)$ be a basis of~$\scrV$ with $e$ even, $f$ odd. The symmetry of~$B$ implies that $B(f,f) = 0$. The non degeneracy then implies that $B(e,f)$ is even and invertible. Dividing $f$ by~$B(e, f)$, we may assume that $B(e, f) = 1$. As $B$ is odd, so is $\alpha := B(e,e)$. The basis~$(e - \frac{\alpha}{2}f, f)$ is of the announced type. If $n \geq 1$, one finds as in~\ref{2.13} even and odd local sections $e$~and~$f$ such that $B(e, f)$ is invertible. The bundle~$\scrV$ is then the orthogonal direct sum of~$\scrV'$, spanned by $e$~and~$f$, and of its orthogonal complement~$\scrV''$. One concludes by applying the induction hypothesis to~$\scrV''$. 
\end{proof}

\subsection{}\label{4.3} Let $\bar Q$ be the space of odd quadratic forms on~$V$. It is the affine space attached to~$\sym^2 W \otimes \Pi$. Let $Q$ be the open and dense subscheme of non degenerate quadratic forms. As in~\ref{3.1}, it follows from~\ref{4.2} that it is a homogeneous space of~$\GL(V)$. Let us fix a non degenerate odd quadratic form~$q$ on~$V$, with the associated odd bilinear form~$B$. As in~\ref{3.1} the map
\[ g \mapsto g(q) := q \circ g^{-1} : \GL(V) \to Q \]
is flat, with fiber~$\pi\osv$ at~$q$, and induces an isomorphism 
\begin{equation}
	\label{4.3.1} \GL(V)/\pi\osv \xrightarrow{\sim} Q. 
\end{equation}

\subsection{}\label{4.4} Our aim is a description of the $\pi\osv$-invariant tensors on~$V$. Because $B$ is a perfect pairing, it suffices to describe the $\pi\osv$-invariant elements in~$\scrhom(V^{\otimes N}, \dbC)$. Because $-1$ is in~$\pi\osv$, non zero invariants can exist only for~$N$ even. Using \ref{2.6}, the problem can be rephrased as describing the $\osv$-invariant functions on~$\bfV^N$, of degree one in each variable.

From~\eqref{4.3.1}, we get that evaluation at~$q$ induces an isomorphism from $\GL(V)$-invariant function on~$\bfV^N \times Q$ and $\pi\osv$-invariant functions on~$\bfV^N$, and the same holds when considering only functions of degree one each variable~$v$ in~$\bfV$. The key result (cf.~\ref{3.4}) is: 
\begin{thm}
	\label{4.5} Let $f$ be a $\pi\osv$-invariant function on~$\bfV^N$, and let $F$ be the corresponding $\GL(V)$-invariant function on~$\bfV^N \times Q$. Then, $F$ extends to a function~$\bar F$ on~$\bfV^N \times \bar Q$. 
\end{thm}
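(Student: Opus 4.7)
The plan is to mirror the proof of Theorem~\ref{3.4}, with Proposition~\ref{4.2} playing the role of Proposition~\ref{2.13}. Introduce
\[ u : E := \bfhom(V, V) \to \bar Q, \qquad T \mapsto q \circ T, \]
so that $u^{-1}(Q) = \GL(V)$. The analog of Lemma~\ref{3.6} is immediate: the pullback $u^*F$ on $\bfV^N \times \GL(V)$ extends to $\bfV^N \times E$ via $(v_1, \ldots, v_N, T) \mapsto f(T v_1, \ldots, T v_N)$, which makes sense for every $T \in E$, not just $T \in \GL(V)$.

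Next, single out one open $\bar Q_1 \subset \bar Q$: the locus of odd quadratic forms $q'$ admitting a codimension $1|1$ subspace on which $q'$ is non-degenerate (equivalently, for which there exist local sections $e$ even and $f$ odd with $B'(e, f)$ invertible). In contrast to the orthosymplectic case, which uses the pair $\bar Q_1, \bar Q_2$ of~\ref{3.7}, only a single open is needed here because Proposition~\ref{4.2} has a unique building block, namely a non-degenerate $1|1$ piece. To see that $\bar Q \setminus \bar Q_1$ has codimension $\geq 2$, reduce modulo odd nilpotents: $\bar Q_{\mathrm{rd}}$ is the affine space of pairings $V_0 \otimes V_1 \to \dbC$, i.e., of $n \times n$ matrices, and the complement of $\bar Q_1$ corresponds to the rank $\leq n - 2$ determinantal variety, of codimension $(n - (n - 2))^2 = 4$.

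The key substep, analog of Lemmas~\ref{3.9}--\ref{3.10}, is that $u^{-1}(\bar Q_1) \to \bar Q_1$ is surjective for the faithfully flat topology. Given $q'$ on $\scrV := \scrO_S \otimes V$ with a codimension $1|1$ non-degenerate subbundle $\scrV'$, decompose $\scrV = \scrV' \oplus {\scrV'}^\perp$ orthogonally for $B'$. Fix once and for all $V_1 \subset V$ of codimension $1|1$ with $q|_{V_1}$ non-degenerate, so $V = V_1 \oplus V_1^\perp$. Proposition~\ref{4.2} applied Zariski-locally to the $(n-1)|(n-1)$-dimensional $(\scrV', q'|_{\scrV'})$ yields an isomorphism $T_1 : \scrV' \to \scrO_S \otimes V_1$ with $q|_{V_1} \circ T_1 = q'|_{\scrV'}$. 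On the $1|1$ complement ${\scrV'}^\perp$, whose induced form may be degenerate, put $V_1^\perp$ in normal form so that $q|_{V_1^\perp}(x e + y f) = x y \cdot \bfe$, and write $q'|_{{\scrV'}^\perp}(x' e' + y' f') = ((x')^2 a + x' y' c) \cdot \bfe$ with $a \in R_1$, $c \in R_0$; then the morphism
\[ T_2(e') := e + a f, \qquad T_2(f') := c f \]
satisfies $q|_{V_1^\perp} \circ T_2 = q'|_{{\scrV'}^\perp}$, as a direct super sign check confirms. Unlike Lemma~\ref{3.10}, no square root is extracted, so the construction is Zariski-local. The combined $T := T_1 \oplus T_2 \in E(S)$ lifts $q'$ under $u$.

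Finally, Corollary~\ref{2.20} extends $F$ from $\bfV^N \times Q$ to $\bfV^N \times \bar Q_1$, and~\ref{2.21} (the affine space $\bar Q$ is smooth and $\bar Q \setminus \bar Q_1$ has codimension $\geq 2$) propagates the extension to all of $\bfV^N \times \bar Q$, yielding the desired $\bar F$. The main delicacy is the explicit construction of $T_2$ on the $1|1$ complement, where the super sign bookkeeping must be handled carefully; once that is in place, the rest is a direct transcription of the orthosymplectic argument.
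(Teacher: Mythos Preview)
Your proof is correct and follows essentially the same route as the paper: the same map $u:E\to\bar Q$, the same single open $\bar Q_1$ defined by the existence of a non-degenerate codimension $1|1$ subspace, the same reduction to an explicit $T$ on a $1|1$ complement, and the same appeal to~\ref{2.20} and~\ref{2.21}. Your write-up of the $1|1$ step is in fact cleaner than the paper's (which contains a minor typo in the formula for $q'$), and your observation that the lift exists Zariski-locally matches the paper's remark that one obtains genuine local sections rather than merely a faithfully flat cover.
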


The remarks~\ref{3.5} continue to hold, mutatis mutandis. Let $u : E = \bfhom(V,V) \to \bar Q$ be morphism $T \mapsto q \circ T$. One has $u^{-1}(Q) = \GL(V)$: 
\begin{equation}
	\label{4.5.1} 
	\begin{tikzcd}
		\GL(V) \rar[hook] \dar{u} &E \dar{u}\\
		Q \rar[hook] &\bar Q 
	\end{tikzcd}
	\quad. 
\end{equation}
Let $\tilde F$ be the function on~$\bfV^N \times \GL(V)$ which is the inverse image by~$u$ of~$F$. The same proof as in~\ref{3.6} shows that $\tilde F$ extends to a function~$\bar{\tilde F}$ on~$\bfV^N \times E$.

Imitating the proof of~\ref{3.4}, one deduces~\ref{4.5} from the lemmas \ref{4.7}~and~\ref{4.8} below.

\subsection{}\label{4.6} The condition on~$q'$ in~$\bar Q$: ``there exists in~$V$ a subspace~$V'$ of codimension~$1|1$ on which $q$ is non degenerate'' is an open condition. Let $\bar Q_1$ be the open subscheme of~$\bar Q$ where it holds. 
\begin{lemma}
	\label{4.7} The complement of~$\bar Q_1$ in~$\bar Q$ is of codimension~$\geq 2$. 
\end{lemma}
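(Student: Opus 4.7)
The plan is to reduce to ordinary algebraic geometry on $\bar Q_{\mathrm{rd}}$, since codimension is purely topological, as in the argument just after Lemma~\ref{3.8}. So first I would identify $\bar Q_{\mathrm{rd}}$ explicitly. An odd supersymmetric morphism $B : V \otimes V \to \Pi$ takes values in $\Pi_1 \cong \dbC$ and preserves total parity, hence vanishes unless one input is even and the other is odd. In a graded basis $(e_1,\dots,e_n,f_1,\dots,f_n)$ of $V$, the entries $B(e_i, f_j) \in \dbC$ form the $n^2$ even coordinates on $\bar Q$, while the entries $B(e_i, e_j)$ and $B(f_i, f_j)$ are odd, hence nilpotent. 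Thus $\bar Q_{\mathrm{rd}} \cong \Hom(V_0, V_1^\vee) \cong M_n(\dbC)$, and non-degeneracy of $q$ corresponds to invertibility of the associated matrix.

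Next I would translate the defining condition of $\bar Q_1$ into matrix language. A $\dbZ/2$-graded subspace $V' \subset V$ of codimension $1|1$ is $V'_0 \oplus V'_1$ with each $V'_i \subset V_i$ of ordinary codimension one. Writing $\beta : V_0 \to V_1^\vee$ for the linear map corresponding to $B$ at the reduced level, $B|_{V'_0 \otimes V'_1}$ is non-degenerate exactly when the composite
$$V'_0 \hookrightarrow V_0 \xrightarrow{\beta} V_1^\vee \twoheadrightarrow (V'_1)^\vee$$
is an isomorphism. I claim such $(V'_0, V'_1)$ exist if and only if $\mathrm{rank}(\beta) \geq n - 1$. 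Indeed, if $\mathrm{rank}(\beta) \geq n - 1$, then $\ker \beta$ has dimension at most one; choose any codimension-one $V'_0$ not containing it, so that $\beta|_{V'_0}$ is injective with image of dimension $n - 1$ in $V_1^\vee$, and take $V'_1$ to be the hyperplane annihilated by any covector outside that image. Conversely, if $\mathrm{rank}(\beta) \leq n - 2$, then $\beta(V'_0)$ has dimension at most $n - 2 < n - 1 = \dim(V'_1)^\vee$, so the composite cannot be surjective.

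The complement of $\bar Q_1$ in $\bar Q$ therefore corresponds, at the reduced level, to the rank-$\leq n-2$ determinantal locus in $M_n(\dbC)$, which is classically of codimension $(n - (n-2))^2 = 4$, so in particular $\geq 2$, as desired. The main step that requires care is the equivalence between the existence of a non-degenerate codimension-$1|1$ restriction and the rank bound $\mathrm{rank}(\beta) \geq n - 1$; once that is in hand, the codimension count is a standard fact about determinantal varieties.
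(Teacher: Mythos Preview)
Your argument is correct and follows the same route as the paper: reduce to the reduced scheme $\bar Q_{\mathrm{rd}}$, identify it with the space of bilinear pairings $V_0 \times V_1 \to \dbC$ (i.e.\ $M_n(\dbC)$) stratified by rank, and observe that the complement of $\bar Q_1$ is the locus of rank $\leq n-2$. You supply more detail than the paper does---in particular, you actually verify the equivalence between ``some codimension-$1|1$ restriction is non-degenerate'' and $\mathrm{rank}(\beta)\geq n-1$, which the paper leaves implicit---and you record the exact codimension $4$ rather than merely $\geq 2$.
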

\begin{proof}
	This is a topological question, hence concerns only the reduced scheme~$\bar Q_{\mathrm{rd}}$, which is the space of bilinear pairings between $V_0$~and~$V_1$. This space is stratified by rank. The stratum after the open strata is an irreducible hypersurface, and we are considering the next one. 
\end{proof}
\begin{lemma}
	\label{4.8} For the faithfully flat topology, $u^{-1}(\bar Q_1) \subset E$ covers $\bar Q_1$. 
\end{lemma}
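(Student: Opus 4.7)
The plan is to adapt the proof of Lemma~\ref{3.9}, with the reduction now being to dimension~$1|1$ rather than to~$1|0$ or~$0|2$. It suffices to show that for any scheme~$S$, any sub-bundle $\scrV' \subset \scrV := \scrO_S \otimes V$ of codimension~$1|1$, and any odd quadratic form~$q'$ on~$\scrV$ whose restriction to~$\scrV'$ is non degenerate, there exists a faithfully flat cover $S' \to S$ over which $q'$ pulls back to~$q \circ T$ for some endomorphism~$T$ of~$\scrV$. Applied locally on~$\bar Q_1$ (where such a sub-bundle $\scrV'$ exists by definition of~$\bar Q_1$), this provides the required faithfully flat cover of~$\bar Q_1$ factoring through~$u^{-1}(\bar Q_1)$.

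Non degeneracy of~$q'|_{\scrV'}$ yields, as in Lemma~\ref{3.9}, an orthogonal decomposition $\scrV = \scrV' \oplus {\scrV'}^\perp$ with~${\scrV'}^\perp$ of rank~$1|1$. Fix a subspace $V_1 \subset V$ of codimension~$1|1$ on which $q$ is non degenerate (take, for instance, the span of all but one $(e_i, f_i)$ pair in a standard basis furnished by Proposition~\ref{4.2}); write $V = V_1 \oplus V_1^\perp$ and let $\scrV_1, \scrV_1^\perp$ be the corresponding sub-bundles of~$\scrV$. We look for~$T$ respecting both decompositions, which splits the task into two parts: (a)~an isometric isomorphism $(\scrV', q') \xrightarrow{\sim} (\scrV_1, q|_{V_1})$, supplied locally on~$S$ by Proposition~\ref{4.2}; and (b)~a morphism $T^\perp : {\scrV'}^\perp \to \scrV_1^\perp$ satisfying $q|_{V_1^\perp} \circ T^\perp = q'|_{{\scrV'}^\perp}$.

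For~(b), we compute in local coordinates. After trivializing~${\scrV'}^\perp$ Zariski-locally and normalizing $q|_{V_1^\perp}$ via Proposition~\ref{4.2}, we may choose bases in which $q|_{V_1^\perp}$ becomes~$xy$ (with~$x$ an even and~$y$ an odd dual coordinate), while $q'|_{{\scrV'}^\perp}$ becomes an arbitrary odd element $\alpha\, x^2 + \beta\, xy$ of~$\sym^2$ of the dual, with $\alpha \in R_1$ and $\beta \in R_0$. An endomorphism of~$\scrO^{1|1}$ specified by $T^\perp(e) = a e + d f$ and $T^\perp(f) = c e + b f$ (with $a, b$ even and $c, d$ odd) has dual $T^*(x) = ax + cy$, $T^*(y) = dx + by$, and therefore pulls back~$xy$ to~$ad\, x^2 + (ab - cd)\, xy$, using $y^2 = 0$ and $yx = xy$. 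The choice $a = 1$, $d = \alpha$, $c = 0$, $b = \beta$ furnishes an explicit solution; combining with~(a) and extending by zero across the decomposition produces the desired~$T$.

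The key point, compared with Lemmas~\ref{3.9}--\ref{3.10}, is that the dimension~$1|1$ reduction admits an explicit polynomial solution over~$R$ itself: the odd nilpotent coefficient~$\alpha$ is absorbed linearly as the matrix entry~$d$, rather than through the extraction of a square root or a determinantal equation. Consequently no additional flat cover is needed beyond the Zariski and etale trivializations, and the principal technical verification reduces to the super matrix computation above.
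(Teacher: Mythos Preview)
Your proof is correct and essentially identical to the paper's: both reduce to dimension~$1|1$ via the orthogonal decomposition (as in Lemma~\ref{3.9}), normalize~$q$ on the $1|1$ piece using Proposition~\ref{4.2}, and write down the same explicit solution $T^\perp$ acting on coordinates by $(x,y) \mapsto (x,\ \alpha x + \beta y)$. Your closing observation that no flat cover beyond Zariski/\'etale localization is required matches the paper's remark that here one in fact obtains local sections of~$u$.
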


Here, it is not true that $u^{-1}(\bar Q_1) \subset E$ is flat over~$\bar Q_1$. But we will show that there exist local sections, so that the existence of~$\bar{\tilde F}$ trivially implies that $F$ on~$\bfV^n \times Q$ extends to~$\bfV^N \times Q_1$. 
\begin{proof}
	Proceeding as in the proof of~\ref{3.9}, one has to show that if on~$S$ we have vector bundles $\scrV$~and~$\scrV'$ of dimension~$1|1$, a non degenerate odd quadratic form~$q$ on~$\scrV$, and an odd quadratic form~$q'$ on~$\scrV'$, then locally on~$S$, there exists $T: \scrV' \to \scrV$ such that $q' = q \circ T$.
	
	In coordinates, in a suitable local basis, $q$ can be written as
	\[ q(x, y) = xy \qquad \text{($x$ even, $y$ odd)} \]
	while $q'$ can be written
	\[ q'(x, y) = \alpha x + axy \qquad \text{($x$ even, $y$ odd, $\alpha$ even, $a$ even).} \]
	One takes $T: (x, y) \mapsto (x, \alpha x + a y)$. 
\end{proof}

\subsection{FFT: reduction from $\pi \mathrm{O}$ to $\GL$}\label{4.9} One proceeds as in~\ref{3.13} to deduce from~\ref{4.5} and from the first fundamental theorem for~$\GL(V)$ that the $\pi\osv$-invariant multilinear forms on~$V^{\otimes N}$, ($N$ even), with the values in $\dbC$~or~$\Pi$, are the form
\[ v_1,\dotsc, v_N \mapsto B(v_1,v_2) \dotsm B(v_{N-1},v_N) \]
and its transforms under the symmetric group.

\section{Super Pfaffian}

\subsection{}\label{5.1} Let $q$ be a non degenerate quadratic form on a vector space~$V$ of dimension~$m|2n$, and let $B : V \otimes V \to \dbC$ be the associated (super) symmetric bilinear form. Let $\vol$ be one of the two opposite isomorphisms of~$\bigwedge^m V_0$ with~$\dbC$ such that for~$v_1,\dotsc,v_m$ in~$V_0$ one has 
\begin{equation}
	\label{5.1.1} \vol(v_1 \wedge \dotsb \wedge v_m)^2 = \det B(v_i, v_j) 
\end{equation}
We will view $\vol$ as an alternating multilinear form on~$V_0$. As such, it defines a function, still noted $\vol$, on $\bfV_0^m$ \eqref{2.6}. Similarly, one views $B(v_i, v_j)$ as a function on~$\bfV^m$.

The scheme~$\bfV_0$ is $\bfV_{\mathrm{rd}}$, and is the reduced subscheme of~$\bfV$. It is defined in~$\bfV$ by a nilpotent ideal. Let $U$ be the open subscheme of~$\bfV^m$ on which $\det B(v_i, v_j)$ is invertible. 
\begin{prop}
	\label{5.2} On $U$, the function $D := \det B(v_i, v_j)$ has a unique square root~$\Delta$ which, on~$U_{\mathrm{rd}} \subset \bfV_0$, agrees with $\vol(v_1,\dotsc,v_m)$. 
\end{prop}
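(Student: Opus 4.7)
My plan is to factor the Gram determinant $D$ as the square of a classical piece times a unit of the form $1+\nu$ with $\nu$ nilpotent, and extract the square root via a (terminating) binomial series. The canonical decomposition $\sym^*(W)=\sym^*(W_0)\otimes\Lambda^*(W_1)$ of the coordinate ring of $\bfV$ supplies a global retraction $\pi:\bfV\to\bfV_0=\bfV_{\mathrm{rd}}$, hence $\pi:\bfV^m\to\bfV_0^m=(\bfV^m)_{\mathrm{rd}}$. For an $R$-point $(v_1,\dotsc,v_m)$ of $\bfV^m$, decompose $v_i=v_i^++v_i^-$ according to $V_R=(V_0\otimes R_0)\oplus(V_1\otimes R_1)$; since $B$ vanishes on $V_0\otimes V_1$ (it is an even morphism with purely even target $\dbC$),
\[ B(v_i,v_j)=A_{ij}+N_{ij},\qquad A_{ij}:=B(v_i^+,v_j^+),\quad N_{ij}:=B(v_i^-,v_j^-). \]
Each $N_{ij}$ is a sum of products of two odd coordinate functions and so lies in the nilpotent ideal $(\scrO_1)$ cutting out the reduced subscheme; in particular the matrix $N$ has nilpotent entries.

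Since $D$ is invertible on $U$ and $\det A$ is the reduction of $D$ modulo $(\scrO_1)$, $\det A$ is also invertible on $U$. One then factors
\[ D=\det A\cdot\det(I+A^{-1}N)=(\pi^*\vol)^2\cdot(1+\nu), \]
where $\pi^*\vol$ is the pull-back of $\vol$ from $\bfV_0^m$ and satisfies $(\pi^*\vol)^2=\det A$, and where $\nu$ is nilpotent because the entries of $A^{-1}N$ are. The finite binomial series
\[ \sqrt{1+\nu}\;:=\;\sum_{k\geq 0}\binom{1/2}{k}\,\nu^k \]
provides an even square root of $1+\nu$, and I set $\Delta:=\pi^*\vol\cdot\sqrt{1+\nu}$. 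Then $\Delta^2=D$ by construction, and restricting to $U_{\mathrm{rd}}$ kills $\nu$ so that $\Delta$ restricts to $\vol(v_1,\dotsc,v_m)$.

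For uniqueness, given two square roots $\Delta_1,\Delta_2$ of $D$ restricting to $\vol$ on $U_{\mathrm{rd}}$, the identity $(\Delta_1-\Delta_2)(\Delta_1+\Delta_2)=0$ combined with the fact that $\Delta_1+\Delta_2$ reduces to $2\vol$ on $U_{\mathrm{rd}}$ (a unit, since $\vol^2=D|_{U_{\mathrm{rd}}}$ is a unit) forces $\Delta_1=\Delta_2$: any function on $U$ with invertible reduction is invertible via the geometric series on the nilpotent ideal $(\scrO_1)$, so $\Delta_1+\Delta_2$ is a unit. I do not foresee a serious obstacle; the only non-formal input is that $B$ vanishes between $V_0$ and $V_1$, which forces $N$ to have nilpotent entries, and the rest is bookkeeping with the binomial series. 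The uniqueness also guarantees that the construction is canonical (a priori, $\pi^*\vol$ involves the choice of retraction, but any two square roots with the same restriction agree), so no gluing problem arises.
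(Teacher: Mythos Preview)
Your proof is correct, but it takes a different and more explicit route than the paper.

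The paper's argument is a one-liner in terms of infinitesimal lifting: the double cover $\scrR\to\bfV^m$ defined by $T^2=D$ is \'etale over $U$ (because $D$ is a unit there), and since $U_{\mathrm{rd}}\hookrightarrow U$ is a nilpotent thickening, the section $\vol$ of $\scrR$ over $U_{\mathrm{rd}}$ lifts uniquely to a section over $U$. This is the defining property of \'etale morphisms (formal \'etaleness), and nothing more is said.

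Your approach instead exploits the global retraction $\pi:\bfV^m\to\bfV_0^m$ coming from the canonical algebra splitting $\sym^*(W)=\sym^*(W_0)\otimes\Lambda^*(W_1)$, writes $D=(\pi^*\vol)^2(1+\nu)$ with $\nu$ nilpotent, and extracts the square root of $1+\nu$ by the terminating binomial series. This is essentially the \emph{proof} of the infinitesimal lifting property for the square-root cover, made explicit in this particular case. The benefit of your version is that it gives a closed formula for $\Delta$ and makes direct contact with the binomial expansions the paper uses later (in the $m=1$ case of Theorem~\ref{5.4}); the benefit of the paper's version is brevity and that it invokes only the general \'etale formalism already set up in \S2. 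Your remark about the choice of retraction is over-cautious: the splitting $V=V_0\oplus V_1$ is intrinsic, so $\pi$ is canonical from the start; but the uniqueness argument you give is valid in any case.
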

\begin{proof}
	Let $\scrR$ be the double covering of~$\bfV^m$ defined by the equation $T^2 = D$. An $R$-point of~$\scrR$ above an $R$-point $(v_1,\dotsc,v_m)$ of~$\bfV^m$ is a square root in~$R_0$ of $\det B(v_i, v_j)$. Over $U$, this double covering is etale. A section of~$\scrR$ above~$U_{\mathrm{rd}}$ hence uniquely extends to~$U$. This is applied to the restriction to~$U_{\mathrm{rd}}$ of $\vol(v_1,\dotsc,v_m)$ on~$\bfV_0^m$. 
\end{proof}

\subsection{}\label{5.3} Suppose we have $U$ schematically dense in a scheme~$X$, and a function~$\delta$ on~$U$ such that 
\begin{enumerate}
	[a)] 
	\item the restriction of~$\delta$ to the reduced scheme~$U_{\mathrm{red}}$ extends to~$X_{\mathrm{red}}$; 
	\item the extension vanishes outside of~$U_{\mathrm{red}}$, that is, its invertibility locus is contained in~$U_{\mathrm{red}}$. 
\end{enumerate}
Then, any large enough power~$\delta^N$ of~$\delta$ extends to~$X$.

In the case of~$\Delta$, we want to prove a more precise statement. 
\begin{thm}
	\label{5.4} The power~$\Delta^{2n+1}$ of~$\Delta$ extends to~$\bfV^m$. 
\end{thm}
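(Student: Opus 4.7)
The aim is to show that $\Delta^{2n+1}$, a priori defined on $U = \{D \text{ invertible}\} \subset \bfV^m$, extends to a regular function on all of $\bfV^m$. The strategy is to reduce, via codimension and faithfully flat descent, to a local computation on a cover where the ingredients can be made explicit through a Pfaffian formula in the $2n \times 2n$ setting.

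First, by Proposition \ref{2.21} and the smoothness of $\bfV^m$, it suffices to extend $\Delta^{2n+1}$ to an open subscheme $X \subset \bfV^m$ whose complement has codimension $\geq 2$. Take $X$ to be the complement of the reduced locus where the even parts $v_1^0, \dots, v_m^0$ span a subspace of $V_0$ of dimension at most $m - 2$; by the standard rank stratification of square matrices this locus has codimension $4$ in $\bfV_0^m = (\bfV^m)_{\mathrm{rd}}$. The task therefore reduces to extending across the rank-$(m-1)$ stratum. Working locally and applying Proposition \ref{2.13} together with the faithfully flat descent of Lemma \ref{3.9}, one may pass to a cover on which $q$ takes the canonical form \eqref{2.11.1} and $(v_1, \dots, v_m)$ is in a standard normalization; by Corollary \ref{2.20} it suffices to produce the extension on this cover.

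The heart of the argument is an explicit computation. Writing $A = (B(v_i^0, v_j^0))$ and $M = (B(v_i^1, v_j^1))$, so that $D = \det A \cdot \det(I + A^{-1}M)$ and $\det A = \vol^2$, one uses the factorization $M = -XJX^T$ through the $2n$-dimensional symplectic space $V_1$ (where $X$ is $m \times 2n$ with odd entries and $J$ is the symplectic matrix) together with the super-Sylvester identity $\det(I - PQ) \cdot \det(I - QP) = 1$ for purely-odd factors $P$, $Q$ (a consequence of the Berezinian of $\begin{pmatrix} I_m & P \\ Q & I_r \end{pmatrix}$). This gives
\[
\det(I + A^{-1}M) = \det(I_{2n} - B_{\mathrm{super}}J)^{-1}, \qquad B_{\mathrm{super}} := X^T A^{-1} X,
\]
where $B_{\mathrm{super}}$ is a $2n \times 2n$ antisymmetric matrix. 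The Pfaffian identity $\det(I_{2n} - B_{\mathrm{super}}J) = \det(J) \cdot \mathrm{Pf}(J^{-1} - B_{\mathrm{super}})^2$ then yields (up to a fixed sign depending on $\det J$) the formula $\Delta = \vol/\mathrm{Pf}(J^{-1} - B_{\mathrm{super}})$, whence
\[
\Delta^{2n+1} = \frac{\vol^{(2n+1)^2}}{\tilde R^{2n+1}}, \qquad \tilde R := \vol^{2n} \mathrm{Pf}(J^{-1} - B_{\mathrm{super}}).
\]
Here $\tilde R$ is a polynomial in the coordinates whose reduction modulo the odd variables is $\pm\vol^{2n}$.

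The main obstacle is verifying that this rational expression is in fact a polynomial — equivalently, that $\tilde R^{2n+1}$ divides $\vol^{(2n+1)^2}$ in $k[x, \xi]$. I would establish this by writing $\tilde R = \vol^{2n} + N$ with $N$ in the ideal of odd variables; the binomial expansion of $(1 + N/\vol^{2n})^{-(2n+1)}$ terminates because $N$ is nilpotent, and the Pfaffian identities governing $N$ (in particular the relation $\tilde R^2 = \vol^{4n} \det(J^{-1} - B_{\mathrm{super}})$) force the seemingly-polar contributions to cancel. Once the extension is established on the cover, Corollary \ref{2.20} descends it to $X$ and Proposition \ref{2.21} completes the extension to all of $\bfV^m$.
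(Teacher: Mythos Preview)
Your proposal takes a genuinely different route from the paper and contains a real gap at the decisive step.

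The paper's argument never attempts a global Pfaffian formula. Instead it first treats $m=1$ by the terminating binomial expansion \eqref{5.4.2}--\eqref{5.4.3}, and then, for general $m$, works on the open set $U_1=\{D_1:=\det(B(v_i,v_j))_{1\le i,j\le m-1}\text{ invertible}\}$. There one has the orthogonal splitting $\scrV=\scrV'\oplus\scrV''$ with $\scrV'=\langle v_1,\dots,v_{m-1}\rangle$ and $\dim\scrV''=1\mid 2n$, giving $D=D_1\cdot B(v_m'',v_m'')$. Etale locally one extracts $D_1^{1/2}$ (harmless, since $D_1$ is invertible on $U_1$) and puts $\scrV''$ in standard form; then $\Delta=\pm D_1^{1/2}\cdot B(v_m'',v_m'')^{1/2}$, and the $m=1$ computation shows $[B(v_m'',v_m'')^{1/2}]^{2n+1}$ is regular on $U_1$. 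Since $U\cup U_1$ has complement of codimension $\ge 2$, \ref{2.21} finishes.

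Your approach instead produces (up to signs) the identity $\Delta^{2n+1}=\vol^{(2n+1)^2}/\tilde R^{2n+1}$ on $U$, with $\tilde R$ polynomial and $\tilde R\equiv \pm\vol^{2n}$ modulo the odd ideal. But this formula is still only defined where $\vol$ (equivalently $A$) is invertible, which is exactly $U$; it gives no information on your larger open set $X$. The entire burden therefore falls on the claim that $\tilde R^{2n+1}$ divides $\vol^{(2n+1)^2}$ in the polynomial ring, and your justification (``the Pfaffian identities governing $N$ \dots\ force the seemingly-polar contributions to cancel'') is not an argument. Writing $\tilde R=\pm\vol^{2n}+N$ and expanding $(1+N/\vol^{2n})^{-(2n+1)}$ yields a finite sum, but the individual terms carry denominators $\vol^{2nk}$ with $k$ ranging up to roughly $nm$, and you give no mechanism for these poles to cancel. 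Note also that $\tilde R$ does \emph{not} in general divide $\vol^{2n+1}$: already for $m=n=1$ one has $\tilde R=\pm(x^2+c\,y_1y_2)$ and $x^3/\tilde R$ is not polynomial, so the divisibility you need is genuinely delicate and cannot be read off from $\tilde R^2 D=\mathrm{const}\cdot\vol^{4n+2}$ alone. Your invocation of Lemma~\ref{3.9} is also misplaced: that lemma concerns the map $E\to\bar Q$ in the FFT setting and has no bearing here.

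What your matrix manipulations are missing is precisely the inductive/orthogonal-splitting idea that isolates a single $1\mid 2n$ direction, where the exponent $2n+1$ visibly suffices because the odd part of $B(v'',v'')$ has nilpotency order $n+1$. Without that reduction, the polynomiality of your rational expression is as hard as the theorem itself.
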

\begin{proof}
	[Proof when $m = 1$] Let us work in coordinates, using a basis in which the function~$B(v,v)$ on~$\bfV$ has a standard form: if the coordinates of~$v$ are $x$, even, and $y_i$ ($1 \leq i \leq 2n$), odd, we assume that 
	\begin{equation}
		\label{5.4.1} B(v,v) = x^2 + \sum_1^n y_i y_{i+n}. 
	\end{equation}
	For a suitable choice of $\vol$,~$\Delta$, defined for $x$ invertible, is then the square root of~$B(v,v)$, which reduces to~$x$ when the $y_i$ vanish. It is given by the binomial expansion\\
	of~$(x^2 + \sum y_iy_{i+n})^{1/2}$: 
	\begin{equation}
		\label{5.4.2} \Delta = \sum \binom{1/2}{k} x^{1-2k} \left( \sum_1^m y_i y_{i+m} \right)^k. 
	\end{equation}
	Because $\left( \sum_1^n y_i y_{i+m} \right)^k = 0$ for~$k > n$, this expansion terminates.
	
	Similarly, 
	\begin{equation}
		\label{5.4.3} \Delta^{2n+1} = \sum_{k = 0}^n \binom{n + 1/2}{k} x^{2n + 1 - 2k}. \left( \sum_1^n y_i y_{i+m} \right)^k, 
	\end{equation}
	which is a polynomial in~$x$ and the $y_i$, meaning that $\Delta^{2n+1}$ extends to~$\bfV$. 
\end{proof}
\begin{pf54}
	\label{5.5} Define
	\[ D_1 := \det(B(v_i, v_j)_{1 \leq i,\ j \leq m-1}). \]
	and let $U_1$ be the open subset of~$\bfV^m$ where $D_1$ is invertible. 
\end{pf54}
\begin{lemma}
	\label{5.6} The complement of~$U \cup U_1$ in~$\bfV^m$ is of codimension~$\geq 2$. 
\end{lemma}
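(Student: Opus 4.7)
Since codimension is a topological notion, it depends only on the reduced scheme $(\bfV^m)_{\mathrm{rd}} = \bfV_0^m = V_0^m$ (cf.~\ref{2.1}). The plan is to restrict $D$ and $D_1$ to this ordinary affine space and argue via a standard determinantal codimension count.

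First I would note that on $V_0^m$, the functions $D = \det B(v_i,v_j)$ and $D_1 = \det(B(v_i,v_j))_{1 \le i,j \le m-1}$ become honest Gram determinants with respect to the non degenerate symmetric form $B^+ := B|_{V_0}$ (cf.~\ref{2.11}). Thus on $V_0^m$ the vanishing locus $\{D=0\}$ is exactly the locus of $(v_1,\dotsc,v_m)$ that fail to be linearly independent in $V_0$, and $\{D_1 = 0\}$ is the locus where $v_1,\dotsc,v_{m-1}$ fail to be linearly independent (the latter condition being independent of $v_m$).

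Next I would observe that $(\bfV^m) \setminus (U \cup U_1) = \{D = 0\} \cap \{D_1 = 0\}$ is contained in $\{D_1 = 0\}$, so it suffices to show that $\{D_1 = 0\}$ has codimension $\ge 2$ in $V_0^m$. Since $\dim V_0 = m$, a choice of basis of $V_0$ identifies the subscheme of $V_0^{m-1}$ on which $v_1,\dotsc,v_{m-1}$ are linearly dependent with the determinantal variety of rank-deficient $(m-1)\times m$ matrices; by the standard formula $(p-r)(q-r)$ for the codimension of the rank $\le r$ locus in the space of $p \times q$ matrices, this has codimension $(1)(2) = 2$. Pulling back to $V_0^m = V_0^{m-1} \times V_0$ preserves codimension, so $\{D_1=0\}$ is codimension $2$ in $V_0^m$, whence the intersection is codimension $\ge 2$.

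There is essentially no obstacle here beyond correctly handling two edge cases: if $m=1$, then $D_1$ is the determinant of the empty matrix, so $U_1 = \bfV^m$ and the complement is empty; if $m=0$, the whole situation is trivial. For $m \ge 2$ the determinantal codimension argument above applies verbatim, and the reduction to the reduced scheme is justified by the fact, recalled in~\ref{2.1}, that topological notions in the super setting depend only on $S_{\mathrm{rd}}$.
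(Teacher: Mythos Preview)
There is a genuine gap. Your identification of $\{D_1=0\}$ on $V_0^m$ with the locus where $v_1,\dotsc,v_{m-1}$ are linearly dependent is incorrect. For $m$ vectors in the $m$-dimensional space $(V_0,B^+)$ that identification is valid, because $B^+$ is nondegenerate on all of $V_0$; but for $m-1$ vectors the Gram determinant $D_1=\det\bigl(B^+(v_i,v_j)\bigr)_{i,j\le m-1}$ also vanishes whenever $v_1,\dotsc,v_{m-1}$ are linearly independent yet span a subspace on which $B^+$ is degenerate. Over $\dbC$ such subspaces exist as soon as $m\ge 2$: already for $m=2$ one has $D_1=B^+(v_1,v_1)$, which vanishes on every nonzero isotropic vector. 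Since $D_1$ is a single nonconstant polynomial, $\{D_1=0\}$ is a hypersurface of codimension~$1$; the linear-dependence locus you correctly computed to have codimension~$2$ is only a proper closed subset of it. Your plan of bounding the complement of $U\cup U_1$ by $\{D_1=0\}$ alone therefore cannot yield codimension~$\ge 2$.

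The paper's argument instead uses both hypersurfaces at once: $\{D=0\}$ and $\{D_1=0\}$ are distinct hypersurfaces in $\bfV^m$ (for instance, a basis $v_1,\dotsc,v_m$ of $V_0$ with $v_1$ isotropic and $v_2,\dotsc,v_{m-1}\in v_1^{\perp}$ lies in $\{D_1=0\}\setminus\{D=0\}$), and since $\{D=0\}$ is irreducible it is not a component of $\{D_1=0\}$. Hence their intersection has codimension~$\ge 2$.
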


Indeed, the complements of $U$ and $U_1$ are distinct irreducible hypersurfaces. 
\begin{lemma}
	\label{5.7} $\Delta^{2n+1}$ extends from~$U_1 \cap U$ to~$U_1$. 
\end{lemma}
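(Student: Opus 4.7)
The plan is to reduce to the already-proven case $m=1$ of Theorem~\ref{5.4} via orthogonal projection of $v_m$ onto the orthogonal complement of $\mathrm{span}(v_1,\ldots,v_{m-1})$, and then to glue etale-local extensions by faithfully flat descent.

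First, on $U_1$, the Gram matrix $G = (B(v_i,v_j))_{1 \leq i,j \leq m-1}$ is invertible; writing $G^{-1} = (G^{ij})$, the section
\[
  w := v_m - \sum_{i,j < m} G^{ij}\, B(v_m, v_j)\, v_i
\]
is an even section of $V \otimes \scrO_{U_1}$ satisfying $B(w, v_i) = 0$ for $i < m$. A standard row/column elimination in $(B(v_i, v_j))$ gives the identity $D = D_1 \cdot B(w, w)$ in $\Gamma(U_1, \scrO)$. The orthogonal complement $V'^\perp$ of $\mathrm{span}(v_1,\ldots,v_{m-1})$ is a subbundle of $V \otimes \scrO_{U_1}$ of rank $1|2n$, and $B$ restricts non-degenerately to it.

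Next I would apply the $m=1$ case of the theorem in a family to $(V'^\perp, B|_{V'^\perp})$. Its proof is the polynomial identity~\eqref{5.4.3} in coordinates adapted to the quadratic form, and by Proposition~\ref{2.13} such coordinates exist etale-locally on $U_1$; a schematic-density argument shows the resulting formula is chart-independent and yields a global polynomial $\Phi \in \Gamma(U_1, \scrO)$ which, on the open locus where $B(w,w)$ is invertible, equals $\Delta_w^{2n+1}$, where $\Delta_w$ is the square root of $B(w,w)$ normalized by a chosen volume form on $(V'^\perp)_0$. Etale-locally on $U_1$ I would pick $\eta$ with $\eta^2 = D_1$, possible since $D_1$ is even and invertible on $U_1$. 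On the overlap $U \cap U_1$ one has $\Delta^2 = D_1 \cdot B(w,w) = \eta^2 \Delta_w^2$; after coordinating the sign of $\eta$ with the normalizations of $\Delta$ and $\Delta_w$ (equivalently, decomposing the volume form on $V_0$ as the wedge of volume forms on $V'_0$ and $(V'^\perp)_0$) this gives $\Delta = \eta \Delta_w$ there, so $\eta^{2n+1} \cdot \Phi$ is an etale-local extension of $\Delta^{2n+1}$ from $U \cap U_1$ to $U_1$.

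To conclude, $U_1$ is open in the smooth affine space $\bfV^m$ and hence smooth, so the nonempty Zariski open $U \cap U_1$ is schematically dense in $U_1$; by Corollary~\ref{2.18} this property is preserved under etale base change. Two etale-local extensions of $\Delta^{2n+1}$ must therefore agree on the double intersection (since they agree on the schematically dense pullback of $U \cap U_1$), and Proposition~\ref{2.19} glues them into a global extension of $\Delta^{2n+1}$ to $U_1$. The hard part will be the sign coordination in the etale-local step: it amounts to keeping track of the induced orientation on $(V'^\perp)_0$ from the given orientation of $V_0$ and the ordered frame $(v_1, \ldots, v_{m-1})$ of $V'_0$, so that the locally-constructed functions $\eta^{2n+1} \Phi$ really patch on overlaps.
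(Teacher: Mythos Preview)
Your proposal is correct and follows essentially the same route as the paper: orthogonal projection of $v_m$ onto $\scrV'' := (\mathrm{span}(v_1,\ldots,v_{m-1}))^\perp$, the factorization $D = D_1 \cdot B(v_m'',v_m'')$, passage to an \'etale cover where $D_1$ has a square root and $\scrV''$ admits standard coordinates, reduction to the $m=1$ formula~\eqref{5.4.3}, and gluing via schematic density and Proposition~\ref{2.19}.

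The one place where you make life harder than necessary is the ``sign coordination'' you flag as the hard part. The paper sidesteps this entirely by working on a \emph{connected component} $U_1''$ of the \'etale cover. On the connected open $(U\cap U_1)''$, the quotient $\Delta/D_1^{1/2}$ is \emph{some} square root of $B(v_m'',v_m'')$, and since there are only two such on a connected scheme, it is given by~\eqref{5.4.2} or its negative; either way its $(2n+1)$st power is $\pm$\eqref{5.4.3}, which is polynomial. No orientation bookkeeping on $(V'^\perp)_0$ is needed---and indeed that object is not cleanly defined over a super base (cf.~\ref{2.9}). The patching on overlaps is then automatic, exactly as you say, by uniqueness of extension from a schematically dense open.
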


This lemma implies that $\Delta^{2n+1}$ extends from $U$ to~$U \cup U_1$, while \ref{2.21}~and~\ref{5.6} then imply that it extends to~$\bfV^m$, proving \ref{5.4}. By \ref{2.19}, it suffices to prove \ref{5.7} locally for the etale topology on~$U_1$. 
\begin{proof}
	[Proof of~\ref{5.7}] Let $\scrV$ be the vector bundle~$V \otimes \scrO$ on~$\bfV^m$. On~$U_1$, the vector bundle~$\scrV$ is the orthogonal direct sum of a subbundle~$\scrV'$ with basis~$v_1,\dotsc,v_{m-1}$, and of its orthogonal complement~$\scrV''$, of dimension~$1|2n$. Indeed, this orthogonal complement of~$v_1,\dotsc,v_{m-1}$ is the kernel of $b : \scrV \to \scrO^{m-1} : v \mapsto (B(v, v_i)_{1 \leq i \leq m-1})$, and the composite
	\[ \scrO^{m-1} \xrightarrow{(v_1,\dotsc,v_{m-1})} \scrV \xrightarrow{b} \scrO^{m-1} \]
	is an isomorphism. 
	
	Decompose $v_m$ as $v_m' + v_m''$, with~$v_m'$ a linear combination of the $v_i$ $(1 \leq i \leq m-1)$, and~$v_m''$ in~$\scrV''$. One has
	\[ D = D_1 . B(v_m'',v_m''). \]
	
	Locally for the etale topology, that is after a pull back by~$u : U_1' \to U_1$ etale and surjective, 
	\begin{enumerate}
		[(a)] 
		\item $D_1$ has a square root $D_1^{1/2}$ (indeed, $D_1$ is invertible) 
		\item $\scrV''$ has a basis in which $B$ has the standard form~\eqref{5.4.1} (cf.~\ref{2.13}.) 
	\end{enumerate}
	
	Let $U_1''$ be any connected component of~$U_1'$, and let $(U \cap U_1)''$ be the trace on~$U_1''$ of the inverse image of $U \cap U_1$ in~$U_1'$. As $U_1'$ is a smooth, $U_1''$ is irreducible and $(U \cap U_1)''$, dense in~$U_1''$, is connected. We will show that (the pull back of) $\Delta^{2n + 1}$ extends from~$(U \cap U_1)''$ to~$U_1''$.
	
	On~$(U \cap U_1)''$, $\Delta / D_1^{1/2}$ is a square root~$B(v'',v'')^{1/2}$ of~$B(v'',v'')$. As $\Delta = D_1^{1/2}. B(v'',v'')^{1/2}$, it suffices to show that $[B(v'',v'')^{1/2}]^{2n+1}$ extends to~$U_1''$.
	
	Let $x$ and the $y_i$ be the coordinates of~$v''$. On~$U_1'$, $x$ is invertible. On~$(U \cap U_1)''$, which is connected, $B(v'',v'')$ has only two square roots, differing by a sign. It follows that $B(v'',v'')^{1/2}$ is given by~\eqref{5.4.2}, or its opposite. The desired extension of~$[B(v'',v'')^{1/2}]^{2n+1}$ is given by~\eqref{5.4.3}, or its opposite. 
\end{proof}

\subsection{}\label{5.8} Sergeev calls the square root~$\Delta^{2n + 1}$ of $(\det B(v_i, v_j))^{2n+1}$ the \emph{super Pfaffian.} The connected component~$\mathrm{SOSp}(V, q)$ of~$\osp(V, q)$ is the kernel of the Berezinian, with values in~$\mu_2 = \{\pm 1\}$. The super Pfaffian is invariant by~$\mathrm{SOSp}(V, q)$, because $\det B(v_i, v_j)$ is, and that $\mathrm{SOSp}$ is connected. The elements of the other connected component of~$\osp$ transform the super Pfaffian into its opposite. 

\subsection{}\label{5.9} In~\cite{lz15} theorem~\ref{5.4} is proved by giving a second definition of the super Pfaffian and showing it is equivalent to the one above. It is also proved in op.~cit. that the invariants of~$\mathrm{SOSp}(V, q)$ on~$S^*(V \otimes \dbC^N)$ are generated by the invariants of~$\osp(V, q)$, together with the super Pfaffian.

For this, the $\mathrm{gl}_N$-module structure of the space of invariants is relevant, and elucidated in~\cite{lz15}.

\newpage 
\begin{flushleft}
	P.~Deligne\qquad \url{deligne@math.ias.edu}
	
	School of Mathematics, IAS, Princeton, NJ 08540, USA
	
	\medskip
	
	G.~I.~Lehrer\qquad \url{gustav.lehrer@sydney.edu.au}
	
	School of Mathematics and Statistics, University of Sydney, NSW 2006, Australia
	
	\medskip
	
	R.~B.~Zhang\qquad \url{ruibin.zhang@sydney.edu.au}
	
	School of Mathematics and Statistics, University of Sydney, NSW 2006, Australia 
\end{flushleft}

\end{document}